\documentclass{amsart}
\usepackage{amsfonts,amssymb,amscd,amsmath,enumerate,verbatim,calc}
\usepackage[all]{xy}
\newcommand{\CM}{Cohen-Macaulay}

\newcommand{\wrt}{with respect to}

\newcommand{\xb}{\mathbf{x} }
\newcommand{\yb}{\mathbf{y} }

\newcommand{\F}{\mathbb{F} }
\newcommand{\G}{\mathbb{G} }

\newcommand{\db}{\mathbf{d} }

\newcommand{\q}{\mathfrak{q} }
\newcommand{\A}{\mathfrak{a} }

\newcommand{\Pc}{\mathcal{P} }
\newcommand{\Sq}{\mathcal{S}{q}}
\newcommand{\Qc}{\mathcal{Q} }
\newcommand{\Hc}{\mathcal{H} }
\newcommand{\rt}{\rightarrow}

\newcommand{\ov}{\overline}

\newcommand{\wh}{\widehat }

\newcommand{\image}{\operatorname{image}}
\newcommand{\coker}{\operatorname{coker}}

\newcommand{\grade}{\operatorname{grade}}
\newcommand{\depth}{\operatorname{depth}}
\newcommand{\codim}{\operatorname{codim}}
\newcommand{\ann}{\operatorname{ann}}
\newcommand{\height}{\operatorname{height}}

\newcommand{\Spec}{\operatorname{Spec}}

\newcommand{\Sym}{\operatorname{Sym}}

\newcommand{\Ext}{\operatorname{Ext}}

\theoremstyle{plain}

\newtheorem{theorem}{Theorem}[section]
\newtheorem{corollary}[theorem]{Corollary}
\newtheorem{lemma}[theorem]{Lemma}
\newtheorem{proposition}[theorem]{Proposition}

\theoremstyle{definition}
\newtheorem{definition}[theorem]{Definition}
\newtheorem{s}[theorem]{}
\newtheorem{remark}[theorem]{Remark}

\theoremstyle{remark}

\begin{document}

\title[Annhilators of local cohomology modules]{Annhilators of local cohomology modules over modular invariant rings and Dickson polynomials}
\author{Tony~J.~Puthenpurakal}
\date{\today}
\address{Department of Mathematics, IIT Bombay, Powai, Mumbai 400 076, India}

\email{tputhen@gmail.com}
\subjclass[2020]{Primary 13A50, 13D45; Secondary 13H10}
\keywords{invariant rings, group cohomology, local cohomology}

 \begin{abstract}
Let $\F_q$ be a finite field with $q = p^s$ elements. Let $V$ be a $d$ dimensional vector space over $\F_q$ and let $G$ be a subgroup of $GL(V)$. Let $R = \F_q[V] = \Sym_{\F_q}(V^*)$ and let $G$ act naturally on $R$. Set $S = R^G$. Let $\db_{d,0}, \db_{d, 1}, \ldots, \db_{d, d-1} \in S$ be the Dickson polynomials with $\deg \db_{d,i} = q^d - q^i$. Let $I$ be a homogeneous ideal of $S$ and let $H^i_I(S)$ be the $i^{th}$-local cohomology module of $S$ \wrt \ $I$. Let $J_i = \sqrt{\ann H^i_I(S)}$. Assume $J_i \neq 0$ and $\dim S/J_i = d - g$. Then we show that
$\db_{d,0}, \ldots, \db_{d, d - g + 1} \in J_i$. We give several applications of our results. An application is a considerably simpler proof of Landweber-Stong conjecture.
\end{abstract}
 \maketitle
\section{introduction}
\s \label{setup}Let $\F_q$ be a finite field with $q = p^s$ elements. Let $V$ be a $d$ dimensional vector space over $\F_q$ and let $G$ be a subgroup of $GL(V)$. Let $R = \F_q[V] = \Sym_{\F_q}(V^*)$ and let $G$ act naturally on $R$. Set $S = R^G$. We note that $GL(V)$ is also a finite group and so $S$ contains $D^*(d) = R^{GL(V)}$. Dickson proved that $D^*(d)$ is a polynomial ring with
$D^*(d) = \F_q[ \db_{d,0}, \db_{d, 1}, \ldots, \db_{d, d-1}]$ with $\deg \db_{d,i} = q^d - q^i$. We follow the exposition given in \cite{SL}. It is well-known that the  Dickson algebra plays a key role in the invariant theory of finite groups in characteristic $p > 0$.
Local cohomology plays an important role in commutative algebra and algebraic geometry. Let $I$ be a homogeneous ideal in $S$. Let $H^i_I(S)$ be the $i^{th}$ local cohomology module of $S$ \wrt \ $I$.

In this paper we prove the following result.
\begin{theorem}\label{main}
(with setup as in \ref{setup}).  Let  $J_i = \sqrt{\ann H^i_I(S)}$. Assume $J_i \neq 0$ and $\dim S/J_i = d - g$. Then
$\db_{d,0}, \ldots, \db_{d, g - 1} \in J_i$.
\end{theorem}

\emph{Applications:}

I. One of the landmark results in modular invariant theory is the proof  by D. Bourguiba and S. Zarati, see \cite{bz}, of
the Landweber-Stong conjecture \cite{LS}. This result states that $\depth S \geq r$ if and only if $\db_{d,d-1}, \cdots, \db_{d, d-r}$ is an $S$-regular sequence. In our paper we give an  alternate proof of this conjecture. We prove.
\begin{theorem}\label{ls}(with hypotheses as in \ref{setup}).  The following conditions are equivalent:
  \begin{enumerate}[\rm (i)]
    \item $\depth S \geq r $.
    \item $\db_{d,d-1}, \cdots, \db_{d, d-r}$ is an $S$-regular sequence.
  \end{enumerate}
\end{theorem}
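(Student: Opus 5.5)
The implication (ii)$\Rightarrow$(i) is immediate. The Dickson polynomials are homogeneous of positive degree, so they lie in the homogeneous maximal ideal $\mathfrak m$ of $S$. An $S$-regular sequence of length $r$ inside $\mathfrak m$ therefore forces $\depth S \geq r$.

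For (i)$\Rightarrow$(ii) I will argue by induction on $r$, the case $r = 0$ being empty. So assume $\depth S \geq r$ and, by induction, that $\db_{d,d-1}, \ldots, \db_{d,d-r+1}$ is $S$-regular. Set $I = (\db_{d,d-1}, \ldots, \db_{d,d-r})S$.

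\emph{Reduction to a grade statement.} Since $S$ is graded and the elements are homogeneous of positive degree, the sequence is regular if and only if $H_1$ of its Koszul complex vanishes. This in turn holds if and only if $\grade I = r$.

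\emph{Bounding the grade.} Because $R$ is finite over $D^*(d)$, the ring $S$ is a domain finite over the polynomial ring $D^*(d)$. Since $D^*(d)$ is normal, going down applies. As the Dickson polynomials form a homogeneous system of parameters of $D^*(d)$, this gives $\height I = r$ and $\dim S/I = d - r$. By induction $\grade I \geq r-1$. Suppose, for a contradiction, that $c := \grade I = r-1$. Then $H^c_I(S) \neq 0$, and we let $J = J_c = \sqrt{\ann H^c_I(S)}$.

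\emph{Applying Theorem \ref{main}.} Since $I \subseteq J$, we have $J \neq 0$. Choose a maximal $S$-regular sequence $x_1, \ldots, x_c$ in $I$. Then
\[
\Ext^c_S(S/I,S) \cong \operatorname{Hom}_S(S/I, S/(x_1,\ldots,x_c)),
\]
and this module is nonzero. Take a prime $P$ that is associated to it and minimal over $J$, and write $\dim S/J = d-g$. Theorem \ref{main} gives $\db_{d,0}, \ldots, \db_{d,g-1} \in J$, while $I \subseteq J$ supplies $\db_{d,d-r}, \ldots, \db_{d,d-1} \in J$. The goal is to show $J = \mathfrak m$. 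Since all $d$ Dickson polynomials generate an $\mathfrak m$-primary ideal, it suffices to show $g \geq d-r$.

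\emph{Conclusion.} Granting $J = \mathfrak m$, the prime $\mathfrak m$ is associated to $\operatorname{Hom}_S(S/I, S/(x_1,\ldots,x_c))$. Hence $\depth S/(x_1,\ldots,x_c) = 0$, so $\depth S = c = r-1 < r$, a contradiction. Thus $\grade I = r$.

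\emph{Main obstacle.} The step I expect to be hardest is the dimension bound $g \geq d-r$; a priori only $g \geq r$ follows from $I \subseteq J$. My first attempt is to compare the dimension of $S/J$ with that of $D^*(d)/(J \cap D^*(d))$, using that $S$ is finite over $D^*(d)$. Two families of Dickson polynomials lie in $J$: the bottom $g$ of them, from Theorem \ref{main}, and the top $r$, from $I$. If $g < d-r$, then $J \cap D^*(d)$ contains the $g+r$ algebraically independent generators $\db_{d,0}, \ldots, \db_{d,g-1}, \db_{d,d-r}, \ldots, \db_{d,d-1}$. Hence
\[
\dim S/J = \dim D^*(d)/(J\cap D^*(d)) \leq d-g-r,
\]
which contradicts $\dim S/J = d-g$ whenever $r \geq 1$. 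This forces $g \geq d-r$, and I would verify carefully that finiteness of $S$ over $D^*(d)$ really gives this equality of dimensions.
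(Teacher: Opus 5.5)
Your proof has a genuine gap at the sentence ``Since $I \subseteq J$, we have $J \neq 0$.'' The module $H^c_I(S)$ is $I$-torsion elementwise, but it is not finitely generated. So no fixed power of $I$ need kill it, and $I \subseteq \sqrt{\ann H^c_I(S)}$ is false in general. Already for $x \in S_+$ homogeneous, $\ann_S H^1_{xS}(S) = \ann_S(S_x/S) = \bigcap_n x^nS = 0$. The module that \emph{is} killed by $I$ is $\Ext^c_S(S/I,S)$, but the paper gives no Cartan structure on it, so Theorem~\ref{main} says nothing about it. Without $I \subseteq J$ you lose $J \neq 0$, the top Dickson polynomials in $J$, and any handle on $\dim S/J$.

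The step cannot be patched, because the argument proves too much. Grant $I \subseteq J$. Then $J \cap D^*(d)$ is a radical, hence prime, $\Pc^*$-invariant ideal containing $\db_{d,d-1}$, so by \ref{dickson} it is $D^*(d)_+$ and $J = S_+$. In particular, the ``main obstacle'' you flag is immediate. Next, $\operatorname{Hom}_S(S/I, S/(x_1,\ldots,x_c))$ embeds in $H^c_I(S)$ and is nonzero, so it is killed by a power of $S_+$, and you get $\depth S = c$. Nothing in this chain uses $c = r-1$. Taking $I = \db_{d,d-1}S$ and $c = 1$, it would give $\depth S = 1$, contradicting normality of $S$ when $d \geq 2$.

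The missing idea is to apply Theorem~\ref{main} only to modules whose annihilator genuinely reflects their support. The paper uses $H^i_{S_+}(S)$. By local duality over $D = D^*(d)$, its Matlis dual is the finitely generated module $\Ext^{d-i}_D(S,D(u))$, which has the same annihilator and has dimension $\leq i$ by Theorem~\ref{dual-dim}. Theorem~\ref{main} then puts $\db_{d,0},\ldots,\db_{d,d-i-1}$ into the radical of that annihilator.

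With $\depth S = d-g$, the paper then checks $\depth S_P \geq d-g$ for every prime $P \supseteq (\db_{d,d-1},\ldots,\db_{d,g})$ of $D$. Suppose $P \neq D_+$, $\height P = d-g+t$, and $\depth S_P < d-g$. Then $\Ext^{t+i}_D(S,D(u))_P \neq 0$ for some $i \geq 1$, which forces $\db_{d,0},\ldots,\db_{d,t+i-1} \in P$. So $P$ contains either all the polynomial generators of $D$ or $d-g+t+i > \height P$ of them, a contradiction in both cases. Note that the Cartan structure is used on $H^\bullet_{S_+}(S)$, not on $H^\bullet_I(S)$.
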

\begin{remark}
D. Bourguiba and S. Zarati prove a considerably more general result than
the Landweber-Stong conjecture. However our proof of the original   Landweber-Stong conjecture follows easily from Theorem \ref{main} and is more direct than D. Bourguiba and S. Zarati proof.
\end{remark}
II. \emph{$S_r$ property of $S$}: \\
Recall a Noetherian ring $T$ satisfies $S_r$ property of Serre if
$$ \depth T_P \geq \min\{\height P, r \} \quad \text{for all primes $P$ of $T$}. $$
The ring $S$ is normal. So in particular it is $S_2$. There exists invariant rings $S$ which are not $S_3$. There are Steenrod operators $\Pc^i \colon S \rt S$ for $i \geq 0$, see \cite[Chapter 11]{SL}.
An ideal $I$ is said to $\Pc^*$ invariant if $\Pc^* I \subseteq I$. There exist only finitely many $\Pc^*$-invariant homogeneous  prime ideals in $S$, see \cite[11.4.8]{SL}. Surprisingly the $S_r$ property of $S$ is controlled by $\Pc^*$-invariant prime ideals in the following sense.
\begin{theorem}\label{sr}(with hypotheses as in \ref{setup}). Let $r \geq 3$. The following conditions are equivalent:
  \begin{enumerate}[\rm (i)]
    \item $S$ satisfies $S_r$ property.
    \item $\depth S_P \geq \min\{\height P, r \}$  for all $\Pc^*$-invariant homogeneous primes $P$ of $S$.
  \end{enumerate}
\end{theorem}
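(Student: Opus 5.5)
The implication (i)$\Rightarrow$(ii) is immediate from the definition of $S_r$, so the plan concerns (ii)$\Rightarrow$(i). Assume (ii) holds and, for contradiction, that $S$ is not $S_r$. I would translate the failure of $S_r$ into a statement about the local cohomology modules $H^i_{\mathfrak m}(S)$, where $\mathfrak m$ is the homogeneous maximal ideal of $S$. Then I would show that the relevant "bad" primes can be taken to be minimal primes of the ideals $J_i=\sqrt{\ann H^i_{\mathfrak m}(S)}$. Finally I would argue that these minimal primes are homogeneous and $\Pc^*$-invariant, which contradicts (ii).

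\emph{Step 1 (duality).} Let $D=D^*(d)$, a polynomial ring over which $S$ is a finite graded module. Graded local duality gives that $H^i_{\mathfrak m}(S)$ is Matlis dual to $E^{d-i}:=\Ext^{d-i}_D(S,D)$. Hence $\sqrt{\ann_S H^i_{\mathfrak m}(S)}=\sqrt{\ann_S E^{d-i}}$, and these ideals are homogeneous. The standard criterion then reads: $S$ satisfies $S_r$ if and only if $\dim E^{d-i}\le i-r$ for every $i<d$, i.e.\ $\dim S/J_i\le i-r$ whenever $J_i\neq S$. (Since $\dim S_P+\dim S/P=d$ for the equidimensional domain $S$, local duality localizes to $H^{i-\dim S/P}_{PS_P}(S_P)\neq0$ exactly when $P\supseteq J_i$ and $P$ is minimal over $J_i$ with the right dimension.) If $S$ is not $S_r$, we can therefore pick $i$ and a minimal prime $P$ of $J_i$ with $\dim S/P=:e>i-r$. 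Localizing gives
\[
\depth S_P\le i-e<\min\{\height P,r\},
\]
using $\height P=d-e>i-e$ because $i<d$. The prime $P$ is homogeneous, being minimal over a homogeneous ideal. The hypothesis $r\ge3$ enters here: $S$ is normal, hence $S_2$, so only $i$ with $\depth S_P\geq 2$ matter. This excludes the degenerate small-height primes, for which the duality bookkeeping uses $J_i\neq0$, the situation of Theorem \ref{main}.

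\emph{Step 2 ($\Pc^*$-invariance), the main obstacle.} I must show that every minimal prime of $J_i$ is $\Pc^*$-invariant. Compute $H^i_{\mathfrak m}(S)$ via the Čech complex on $\db_{d,0},\dots,\db_{d,d-1}$; these generate an $\mathfrak m$-primary ideal since $S$ is finite over $D$. The total Steenrod operation $\Pc=\sum_j\Pc^j t^j$ is a ring homomorphism $S\to S[t]$. The Dickson algebra is $\Pc^*$-stable, and $\Pc(\db_{d,j})$ is $\db_{d,j}$ times a unit modulo the other Dicksons in the appropriate localization. Using this, I expect $\Pc$ to extend to the localizations $S_{\db}$ and hence to act on the Čech complex and on $H^i_{\mathfrak m}(S)$ compatibly with the Cartan formula. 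If $a$ annihilates $H^i_{\mathfrak m}(S)$, then applying $\Pc$ to $a\eta=0$ and inducting on degree shows that $\Pc^j(a)$ annihilates as well, so $\ann H^i_{\mathfrak m}(S)$ is $\Pc^*$-invariant. By the classical facts recorded in \cite[Chapter 11]{SL}, the radical of a $\Pc^*$-invariant ideal is $\Pc^*$-invariant and so are its minimal primes; this applies to $P$. The delicate point is checking that the extension of $\Pc$ to $S_{\db}$ is well defined and commutes with the Čech differentials. If that fails, I would instead place an unstable $\mathcal A$-module structure on $E^{d-i}$ via the $\Pc^*$-stable inclusion $D\subseteq S$.

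\emph{Step 3 (conclusion).} By Step 2 the prime $P$ from Step 1 is a $\Pc^*$-invariant homogeneous prime with $\depth S_P<\min\{\height P,r\}$, which contradicts (ii). Hence $S$ satisfies $S_r$. As a consistency check, Theorem \ref{main} shows that $\db_{d,0},\dots,\db_{d,g-1}\in J_i\subseteq P$. This matches the known description of $\Pc^*$-invariant primes as those contracted from $\F_q$-rational subspaces, and it can be used to pin down $P$ explicitly if needed.
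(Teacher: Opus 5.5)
Your argument is correct, but it takes a different route from the paper's.

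\textbf{The paper's route.} The paper proves (ii)$\Rightarrow$(i) through Theorem \ref{crucial}. For every homogeneous prime $P$ it shows that the finiteness dimension $f_P$ of $H^\bullet_P(S)$ is at least $\min\{\height P, r\}$. This uses induction on $r$, with the base case $r=3$ coming from the Ellingsrud--Skjelbred spectral sequence (Proposition \ref{rachel}), together with Faltings' formula \cite[13.1.17]{bs}. For a non-$\Pc^*$-invariant $P$ it then puts a Cartan structure on $H^i_P(S)$ itself and uses Corollary \ref{stock} to get $H^i_P(S)_P=0$. Non-homogeneous primes are handled separately by Theorem \ref{hom-sr}.

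\textbf{Your route.} You only ever use the modules $H^i_{S_+}(S)$. Graded local duality over $D^*(d)$ turns $S_r$ into the bounds $\dim S/J_i\le i-r$ for $i<d$. So a failure of $S_r$ at any prime, homogeneous or not, is witnessed at a homogeneous minimal prime of some $J_i$. Theorem \ref{chief} and Lemma \ref{ann-p*} then make these minimal primes $\Pc^*$-invariant. This is the same input as Theorem \ref{main}, and it is close in spirit to the paper's proofs of Theorem \ref{ls} and Corollary \ref{khy}.

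\textbf{What each buys.} Your argument is shorter and needs neither the spectral sequence nor Faltings' formula. It also shows that $r\ge 3$ is superfluous: for $r\le 2$ both conditions hold because $S$ is normal. Moreover $J_i\neq 0$ for $i<d$ is automatic since $S$ is a domain, so your remark about where $r\ge3$ enters can simply be dropped. The paper's route, in exchange, yields Corollary \ref{stock} and lower bounds on $f_P$ for arbitrary homogeneous $P$, which it reuses in Theorem \ref{fg-almost}.

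\textbf{Two remarks on your Step 2.}
\begin{enumerate}
\item Your ``delicate point'' is settled, and the fallback is unnecessary. The extension of your total operation to $S_x$ should land in $S_x[[t]]$. There $\Pc(x)=x\,(1+x^{-1}\Pc^1(x)t+\cdots)$ is a unit for every homogeneous $x$, so no special property of the Dickson polynomials is needed. The coefficients of $\Pc(a)\Pc(x)^{-m}$ are exactly the operators $\Qc^r$ of Theorem \ref{loc}. Compatibility with $S_x\to S_{xy}$ (Theorem \ref{maps}) follows from the universal property of localization.
\item Invariance of minimal primes has a one-line proof. Let $P_j$ be a minimal prime of the radical $\Pc^*$-invariant ideal $J$, and pick $f\in\bigcap_{l\neq j}P_l\setminus P_j$. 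For $a\in P_j$ we have $\Pc(a)\Pc(f)=\Pc(af)\in J[t]\subseteq P_j[t]$. On the other hand $\Pc(f)\notin P_j[t]$, since its constant term is $f$. Hence $\Pc(a)\in P_j[t]$.
\end{enumerate}
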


Recall that  $\depth S \geq r$ if and only if $\db_{d,d-1}, \cdots, \db_{d, d-r}$ is an $S$-regular sequence. A natural question is what happens when
$\db_{d,0}, \ldots, \db_{d, r-1}$ is an $S$-regular sequence.
We prove
\begin{corollary}
\label{sr-cor} (with hypotheses as in \ref{setup}) Let $r \geq 3$. The following assertions are equivalent:
\begin{enumerate}[\rm (i)]
  \item $S$ satisfies the $S_r$ condition of Serre.
  \item $\db_{d,0}, \ldots, \db_{d, r-1}$ is an $S$-regular sequence.
\end{enumerate}
\end{corollary}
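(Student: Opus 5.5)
For (i)$\Rightarrow$(ii) I would argue directly, without Theorem \ref{main}. Put $K=(\db_{d,0},\ldots,\db_{d,r-1})S$. These elements are part of a homogeneous system of parameters of the polynomial ring $D^*(d)$, and $S$ is a finite extension domain of $D^*(d)$, so $\height K = r$. For any ideal $\A$ we have
$$\grade \A=\min\{\depth S_Q : Q\supseteq \A\}.$$
The $S_r$ condition gives $\depth S_Q\ge \min\{\height Q,r\}\ge \min\{\height \A,r\}$. Hence $\grade K\ge r$. Since $S$ is graded with homogeneous maximal ideal and the $\db_{d,i}$ are homogeneous of positive degree, this means $\db_{d,0},\ldots,\db_{d,r-1}$ is an $S$-regular sequence.

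For (ii)$\Rightarrow$(i) I would argue by contradiction, using Theorem \ref{sr} and Theorem \ref{main}. By Theorem \ref{sr} it suffices to treat a homogeneous prime $P$ with $\depth S_P<\min\{\height P,r\}$. Set $t=\grade P$. Then $t\le \depth S_P$, so $t<\height P$ and $t<r$. Take $I=P$ in Theorem \ref{main}. By definition of grade, $H^i_P(S)=0$ for $i<t$ and $H^t_P(S)\neq 0$.

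Let $J=\sqrt{\ann H^t_P(S)}$. The module $H^t_P(S)$ is $P$-torsion, so $J\supseteq P\neq 0$, and $\height J\ge \height P\ge t+1$. Since $S$ is a graded domain finite over a polynomial ring in $d$ variables, $\dim S/J=d-g$ with $g=\height J\ge t+1$. Theorem \ref{main} then gives $\db_{d,0},\ldots,\db_{d,t}\in J$. So the ideal $L=(\db_{d,0},\ldots,\db_{d,t})$ satisfies $L^n H^t_P(S)=0$ for some $n$. As $t+1\le r$, hypothesis (ii) makes $\db_{d,0},\ldots,\db_{d,t}$ a regular sequence, so $\grade L\ge t+1$.

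To reach the contradiction I would use the Grothendieck spectral sequence
$$E_2^{j,i}=H^j_L(H^i_P(S))\Rightarrow H^{i+j}_{L+P}(S).$$
Because $H^i_P(S)=0$ for $i<t$, the corner term $E_2^{0,t}=H^0_L(H^t_P(S))$ survives and equals $H^t_{L+P}(S)$. The module $H^t_P(S)$ is $L$-torsion, so this corner term is $H^t_P(S)\neq 0$. Hence $H^t_{L+P}(S)\neq 0$. On the other hand $\grade(L+P)\ge \grade L\ge t+1$, which forces $H^t_{L+P}(S)=0$. This contradiction proves $S$ satisfies $S_r$.

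The main point of care is the edge-map argument in the spectral sequence, namely checking that no differentials hit or leave $E^{0,t}$ when lower rows vanish. One must also make sure Theorem \ref{main} applies: $P$, and hence $I$, must be homogeneous, which Theorem \ref{sr} guarantees, and $J\ne 0$ must hold, which follows from $J\supseteq P\neq 0$.
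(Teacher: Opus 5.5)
Your direction (i)$\Rightarrow$(ii) is correct and is the paper's argument. Since $(\db_{d,0},\ldots,\db_{d,r-1})S$ has height $r$, the $S_r$ condition together with the grade/depth formula gives grade $r$.

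The direction (ii)$\Rightarrow$(i) has a genuine gap at the step ``$H^t_P(S)$ is $P$-torsion, so $J\supseteq P$.'' Being $P$-torsion only means each element is killed by some power of $P$, and the exponent depends on the element. Since $H^t_P(S)$ is in general not finitely generated, its annihilator need not contain any power of $P$. For example, $H^d_{\mathfrak m}(A)$ is $\mathfrak m$-torsion but faithful for a Cohen--Macaulay local domain $A$.

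The same failure occurs in your situation $t<\height P$, inside the paper's setting. If $S$ is not Cohen--Macaulay and $\depth S=d-1$, Kemper's result ($S$ is Cohen--Macaulay iff $f(S)=d$) forces $H^{d-1}_{S_+}(S)$ to have infinite length. Hence $\sqrt{\ann H^{d-1}_{S_+}(S)}\neq S_+$.

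So neither $J\neq 0$ nor $\height J\ge t+1$ is justified, and without them Theorem \ref{main} does not give $\db_{d,0},\ldots,\db_{d,t}\in J$. For $P=S_+$, Theorem \ref{dual-dim} only gives $\dim S/J\le t$, i.e.\ $g\ge d-t$. Theorem \ref{main} then yields just $\db_{d,0},\ldots,\db_{d,d-t-1}$, which is fewer than $t+1$ elements as soon as $2t\ge d$. The spectral-sequence step itself is fine, but the contradiction never gets started.

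The repair is to use the reduction you already invoke more fully. Theorem \ref{sr} lets you assume $P$ is a homogeneous $\Pc^*$-invariant prime. Then apply Lemma \ref{bella} to $P$ itself: it is nonzero, radical and $\Pc^*$-invariant, and $\dim S/P=d-\height P$. This gives $\db_{d,0},\ldots,\db_{d,\height P-1}\in P$. By (ii), the first $\min\{\height P,r\}$ of these form an $S$-regular sequence inside $P$, so $\depth S_P\ge\min\{\height P,r\}$. This is exactly the paper's proof; it needs neither Theorem \ref{main} applied to $H^t_P(S)$ nor any spectral sequence.
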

For the \CM \ property of $S$ we prove.

\begin{corollary}
\label{cm-cor} (with hypotheses as in \ref{setup})  The following assertions are equivalent:
\begin{enumerate}[\rm (i)]
  \item $S$ is \CM.
  \item $\db_{d,0}, \ldots, \db_{d, d-2}$ is an $S$-regular sequence.
\end{enumerate}
\end{corollary}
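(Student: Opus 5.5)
The forward implication is immediate. $S$ is a finite module over the Dickson algebra $D^*(d)=\F_q[\db_{d,0},\ldots,\db_{d,d-1}]$, so $\db_{d,0},\ldots,\db_{d,d-1}$ is a homogeneous system of parameters of $S$. If $S$ is \CM, every homogeneous system of parameters is an $S$-regular sequence, and in particular so is $\db_{d,0},\ldots,\db_{d,d-2}$.

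For the converse, suppose $\mathbf{x}=\db_{d,0},\ldots,\db_{d,d-2}$ is $S$-regular but $S$ is not \CM. Then $t=\depth S$ satisfies $t=d-1$. For $d\le 2$ there is nothing to prove, since $S$ is normal and hence $S_2$. I would work with $I=\mathfrak{m}$, the homogeneous maximal ideal, and $H=H^{d-1}_{\mathfrak m}(S)\neq 0$. By graded local duality over $D^*(d)$, $H$ is Matlis dual to $E=\Ext^{1}_{D^*(d)}(S,D^*(d))$, so $J:=\sqrt{\ann H}=\sqrt{\ann_S E}$. Since $S$ is a domain it is generically \CM, so $J\neq 0$. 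Write $\dim S/J=d-g$. Theorem \ref{main} then gives $\db_{d,0},\ldots,\db_{d,g-1}\in J$.

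The plan has two steps.

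\emph{First step: bound $\dim S/J$.} I want to show $g\ge d-1$, i.e.\ $\dim S/J\le 1$. Since $\operatorname{Supp}E$ is the non-\CM\ locus of $S$ (in the sense of $D^*(d)$-localizations), this amounts to showing that $S_P$ is \CM\ for every prime $P$ with $\dim S/P\ge 2$. For such $P$, I would use that $\mathbf{x}$ is regular to get $\depth S_P\ge \height P - 1$, and then use normality ($S_2$) to push this to $\depth S_P=\height P$ in low height. In general I would argue inductively on $d$ by localizing. If this fails, I would still keep whatever lower bound on $g$ it yields.

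\emph{Second step: contradiction.} Once $g\ge d-1$, Theorem \ref{main} puts all of $\db_{d,0},\ldots,\db_{d,d-2}$ in $J$, so suitable powers $x_j^n$ annihilate $H$. Passing to $\ov S=S/(x_1^n,\ldots,x_{d-1}^n)$, which is again a regular-sequence quotient, the long exact sequences give
$$H^0_{\mathfrak m}(\ov S)\cong H^{d-1}_{\mathfrak m}(S)\neq 0.$$
On the other hand, $\ov S$ is a one-dimensional ring, finite over $\F_q[\db_{d,d-1}]$. I would derive the contradiction by showing that $\db_{d,d-1}$ is a nonzerodivisor on $\ov S$. For this I would combine Theorem \ref{ls}, which gives that $\db_{d,d-1},\ldots,\db_{d,1}$ is regular because $\depth S\ge d-1$, with the permutability of regular sequences of homogeneous elements of positive degree. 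This would yield $\depth S=d$.

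The main obstacle is the first step: controlling $\dim S/J$ so that Theorem \ref{main} covers all of $\db_{d,0},\ldots,\db_{d,d-2}$, together with the final regularity of $\db_{d,d-1}$ modulo powers of $\mathbf{x}$. For the first step I would try to invoke Corollary \ref{sr-cor} with $r=d-1$, when $d-1\ge 3$, to get the $S_{d-1}$ property. That property makes $H^{d-1}_{\mathfrak m}(S)$ of finite length and forces $J=\mathfrak m$. The cases $d\le 3$ would be handled directly from normality.
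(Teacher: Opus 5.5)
Your first step, once you fall back on Corollary \ref{sr-cor} with $r=d-1$ (so $d\geq 4$), is exactly the paper's route. It gives the $S_{d-1}$ property, so $S_P$ is \CM\ for every prime $P\neq S_+$. Hence $H^{d-1}_{\mathfrak m}(S)$ has finite length and $J=\mathfrak m$.

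The genuine gap is your second step, which is where everything that remains has to happen. Once $J=\mathfrak m$, Theorem \ref{main} adds nothing, since a nonzero finite-length module is killed by a power of every element of $\mathfrak m$. Also, $H^0_{\mathfrak m}(\ov S)\neq 0$ is automatic, because a regular sequence of length $d-1=\depth S$ is maximal. The claim that $\db_{d,d-1}$ is a nonzerodivisor on $\ov S$ is literally the statement that $S$ is \CM. It does not follow from knowing that both $\db_{d,0},\ldots,\db_{d,d-2}$ and (by Theorem \ref{ls}) $\db_{d,d-1},\ldots,\db_{d,1}$ are regular: permutability reorders one regular sequence, it does not splice two different ones together.

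In fact, take any graded domain of dimension $d$ and depth $d-1$ whose non-\CM\ locus is exactly $\{\mathfrak m\}$. There, any $d-1$ elements of a homogeneous system of parameters form a regular sequence in any order, yet the full system of parameters is not regular. That is precisely the situation your first step leaves you in. So no argument built only from these regularity facts can produce the contradiction. The Cartan structure does not help either: $S_+$ is $\Pc^*$-invariant, so Lemma \ref{eclair} gives no information there.

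The missing ingredient is a result specific to invariant rings that rules out an isolated non-\CM\ point. This is Kemper's theorem \cite[3.1]{K}: the non-\CM\ locus of a ring of invariants is either empty or has dimension at least one. The paper's proof is just this. Condition (ii) and Corollary \ref{sr-cor} give $S_{d-1}$, so the non-\CM\ locus is contained in $\{S_+\}$, and Kemper's theorem then forces it to be empty.

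Your treatment of $d=3$ is also not right as stated. Normality only gives $\depth S\geq 2$, which is all that (ii) gives as well. For $d=3$ you need the known fact that invariant rings of dimension at most $3$ are \CM, for instance from the Ellingsrud--Skjelbred depth bound used in the proof of Theorem \ref{coho-ann}.
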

\begin{proof}
The assertion (i) $\implies$ (ii) is trivial. We prove the converse. By \ref{sr-cor} we get that $S$ satisfies $S_{d-1}$ property of Serre.  Then the non-CM locus of $S$ is contained in $\{S_+\}$. The assertion follows from a result due to Kemper which states that the non-Cohen-Macaulay locus of a ring of invariants is either empty or has dimension at least one, \cite[3.1]{K}.
\end{proof}

III. \emph{Finite generation of cohomology:}

Let
$$f(S) = f_{S_+}(S) = \min \{i \mid H^i_{S_+}(S) \ \text{is not finitely generated.} \}$$
By Kemper's result $S$ is \CM \ if and only if $f(S) = d = \dim S$. We analyze the case when $f(S) = d -1$.

We prove
\begin{theorem}
\label{fg-almost}(with hypotheses as in \ref{setup}). Suppose $f(S) = d -1$. Then we have
\begin{enumerate}[\rm (1)]
  \item $S_\q$ is \CM \ for all prime ideals $\q$ which are not $\Pc^*$-invariant.
  \item $S_P$ is \CM \ for all $\Pc^*$-invariant homogeneous prime ideals $P$ with $\height P \leq d-2$.
  \item There exists homogeneous $\Pc^*$-invariant prime ideals $P_1, \ldots,P_s$ of $S$ with \\ $\height P_i = d -1$ such that $\cap_{i = 1}^{s}P_i$ defines the non-\CM \ locus of $S$.
  \item $H^{d-1}_{S_+}(S)^\vee$ has dimension one.
\end{enumerate}
\end{theorem}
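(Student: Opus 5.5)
The plan is to translate $f(S)=d-1$ into a condition on depths of localizations, and then show that the non-\CM\ locus of $S$ has dimension exactly one. Since $S$ is a finite module over the polynomial ring $D=D^*(d)$, we have $H^i_{S_+}(S)\cong H^i_{D_+}(S)$. By Grothendieck's finiteness theorem, $S$ being a catenary equidimensional graded domain, $f(S)=\min\{\depth S_\q+\dim S/\q \mid \q\neq S_+\}$ with $\dim S/\q=d-\height\q$. So $f(S)=d-1$ says $\depth S_\q\geq \height \q-1$ for all $\q\neq S_+$, with equality for some $\q$. By graded local duality over $D$, $H^i_{S_+}(S)^\vee\cong \Ext^{d-i}_D(S,D)$ up to shift. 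Localizing, the depth condition forces $\Ext^{j}_D(S,D)_\q=0$ for $j\geq 2$ and $\q\neq S_+$. Hence $H^i_{S_+}(S)$ has finite length for $i<d-1$, and the non-\CM\ locus of $S$ is $\operatorname{Supp}\Ext^1_D(S,D)\cup\{S_+\}$ (the latter only if some lower $H^i$ is nonzero). Thus everything reduces to understanding $J=J_{d-1}=\sqrt{\ann H^{d-1}_{S_+}(S)}$. Here $J\neq 0$ because $S$ is generically free over $D$, so $\Ext^1_D(S,D)$ is torsion.

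Next I would show that the minimal primes of $J$ are $\Pc^*$-invariant. The module $\Ext^1_D(S,D)$, equivalently $H^{d-1}_{S_+}(S)$, is compatible with the Steenrod operations, since the \v{C}ech complex on Dickson invariants is built from $\Pc^*$-stable data and the $\Pc^*$ satisfy a Cartan formula. Hence $\ann$, and therefore $J$, is $\Pc^*$-invariant. By Landweber's theorem (cf. \cite[Chapter 11]{SL}), the radical and the minimal primes of a $\Pc^*$-invariant ideal are again $\Pc^*$-invariant and homogeneous. Write $\dim S/J=d-g$. Theorem \ref{main} gives $\db_{d,0},\ldots,\db_{d,g-1}\in J$. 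So each minimal prime $P$ of $J$ is a $\Pc^*$-invariant prime whose contraction to $D$ is the invariant prime $(\db_{d,0},\ldots,\db_{d,h-1})$ with $h=\height P$. I would use this to identify $P$ with the ideal attached to a subspace $W$ of $\overline{\F}_q\otimes V$, following the classification of $\Pc^*$-invariant primes.

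The key step, and the main obstacle, is to prove $g\geq d-1$, i.e. that every minimal prime $P$ of $J$ has height $d-1$. Suppose $\height P=h\leq d-2$. By minimality, $S_P$ has depth $h-1$, but $S_\q$ is \CM\ for all $\q\subsetneq P$. So the non-\CM\ locus of $S_P$ is the closed point alone. I would then use the standard reduction for invariant rings: $\widehat{S_P}$ is the completion of a localization of the invariant ring of the inertia (pointwise stabilizer) group $G_W\subseteq G$, acting on $V$ with the fixed part split off. This realizes $\widehat{S_P}$ as the completed local invariant ring at the irrelevant ideal of a modular invariant ring of dimension $h\geq 2$, whose non-\CM\ locus is isolated. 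That contradicts Kemper's theorem \cite[3.1]{K}, applied to $G_W$. I expect making this localization/inertia-group identification precise, and compatible with the grading, to be the hardest part. If it fails, I would instead try to use Theorem \ref{main} directly, comparing the lengths forced by $\db_{d,0},\ldots,\db_{d,h-1}\in J$ against the depth bound.

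Granting $g=d-1$, the statements follow. $\Ext^1_D(S,D)$, equivalently $H^{d-1}_{S_+}(S)^\vee$, has dimension exactly one: it is at most one by the previous step, and not zero since $H^{d-1}_{S_+}(S)$ is not finitely generated. This gives (4). The non-\CM\ locus is $V(P_1)\cup\dots\cup V(P_s)\cup\{S_+\}$, where the $P_i$ are the height $d-1$ minimal primes of $J$, all $\Pc^*$-invariant. Since $S_+\supseteq P_i$, this locus is $V(\cap P_i)$, giving (3). A prime in this locus is either some $P_i$ or $S_+$, and both are $\Pc^*$-invariant; this gives (1). Any invariant homogeneous prime of height $\leq d-2$ lies outside the locus, which gives (2).
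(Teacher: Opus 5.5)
Your first two steps are sound: the reduction to $J=J_{d-1}$ via local duality over $D^*(d)$, and the $\Pc^*$-invariance of $J$ and of its minimal primes. This invariance is exactly what Theorem~\ref{chief} and Lemma~\ref{ann-p*} give. Your final deductions would also follow if every minimal prime of $J$ had height $d-1$. The gap is that key step.

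The inertia-group reduction you invoke needs $V=W\oplus V'$ as $G_W$-modules (``with the fixed part split off''). In the modular case the pointwise stabilizer $G_W$ usually admits no $G_W$-stable complement to $W$; already the fixed line of a Jordan block $V_2$ of $\mathbb{Z}/p$ has none. Without the splitting, $S_P$ is only \'etale-locally comparable to the $d$-dimensional ring $\F_q[V]^{G_W}$ localized at a non-maximal prime. Kemper's result \cite[3.1]{K} is a statement about a graded invariant ring at its irrelevant ideal, and says nothing about whether the non-\CM\ locus of such a localization can be isolated. It can be, and then the statement itself fails. So no repair of this step is possible, including your fallback via Theorem~\ref{main}.

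\emph{Counterexample.} Take any prime $p$ and let $G=\mathbb{Z}/p$ act on $V=V_2\oplus V_2\oplus V_2$ over $\F_p$, so $d=6$ and $\dim V^G=3$.
\begin{enumerate}[\rm (a)]
\item By Ellingsrud--Skjelbred \cite{ES}, $\depth S=\min\{\dim V^G+2,d\}=5$. Hence $\depth S_{\q'}\ge \depth S-\dim S/\q'=\height\q'-1$ for every homogeneous prime $\q'$.
\item Translations by vectors of $V^G$ commute with $G$, so they induce automorphisms of $\overline{\F}_p\otimes_{\F_p}S$ preserving its non-\CM\ locus. Since that locus contains the irrelevant ideal, it contains the image of $V^G$.
\item Points off $V^G$ have trivial stabilizer, so the quotient is regular there.
\item Hence the non-\CM\ locus of $S$ is exactly $V(\q)$, where $\q=\mathfrak{P}\cap S$ and $\mathfrak{P}$ is generated by the linear forms vanishing on $V^G$. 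This $\q$ is a homogeneous $\Pc^*$-invariant prime of height $3$.
\end{enumerate}
Thus $f(S)=d-1=5$, yet the theorem fails:
\begin{enumerate}[\rm (a)]
\item $\height\q=3\le d-2$ with $S_\q$ not \CM, violating (2).
\item $\dim H^5_{S_+}(S)^\vee=\dim S/\q=3$, violating (4); and the non-\CM\ locus $V(\q)$ is not cut out by height-$5$ primes, violating (3).
\item Any of the infinitely many homogeneous height-$5$ primes $P\supseteq\q$ that are not $\Pc^*$-invariant has $S_P$ not \CM, violating (1).
\end{enumerate}
Here $\q$ is a minimal prime of $J$ of height $3\le d-2$. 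The ring $S_\q$ is $3$-dimensional, not \CM, and its non-\CM\ locus is its closed point. This is exactly the configuration your argument declares impossible.

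The paper's own proof has the matching hole. In its Claim-2, the inequality $\depth S_\q+\height((P+\q)/\q)\ge\height P$ relies on $\height(P+\q)>\height P$, which fails when $0\ne\q\subsetneq P$. For $\q$ and $P$ as above one gets $f_P\le 2+2<\height P$, so Corollary~\ref{stock} cannot be applied.
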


IV \emph{Cohomological annhilators:}\\
The ideals $J_i = \sqrt{\ann H^i_{S_+}(S)}$  carry a lot of homological information. We first prove
\begin{theorem}\label{coho-ann}(with hypotheses as in \ref{setup}). Let $d \geq 4$. Then $J_i = R$ for $0 \leq i \leq 2$. Furthermore for $3 \leq i \leq d - 1$ we have
$\dim S/J_i \leq i - 2$.
\end{theorem}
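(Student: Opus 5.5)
The plan is to treat $i \le 2$ with standard depth estimates for invariant rings, and then, for $3 \le i \le d-1$, to pass to a finitely generated module by graded local duality over the Dickson algebra and bound its support prime by prime using Serre's condition $S_2$. (The statement writes $J_i = R$; for $i \le 2$ I read this as $J_i = S$, i.e. $H^i_{S_+}(S) = 0$.)

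\emph{The range $i \le 2$.} Since $S = R^G$ is a normal domain of dimension $d \geq 4$, it satisfies $S_2$. So $\depth S \geq 2$, hence $H^0_{S_+}(S) = H^1_{S_+}(S) = 0$. For $H^2$ I would invoke the classical depth bound for modular invariant rings (Ellingsrud--Skjelbred, in Kemper's form):
$$\depth S \geq \min\{d, \dim V^{P} + 2\},$$
where $P$ is a Sylow $p$-subgroup of $G$. Since $P$ is a $p$-group acting on an $\F_q$-vector space, $V^{P} \neq 0$, and therefore $\depth S \geq \min\{d,3\} = 3$. This gives $H^2_{S_+}(S) = 0$, so $J_0 = J_1 = J_2 = S$.

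\emph{Reduction to a finitely generated module.} Let $D = D^*(d)$, a polynomial ring over which $S$ is a finite graded module. For $D$-modules we have $H^i_{S_+}(S) = H^i_{D_+}(S)$. By graded local duality over $D$, the graded Matlis dual of $H^i_{S_+}(S)$ is, up to a shift, $E_i = \Ext^{d-i}_D(S, D)$. This is a finitely generated graded $S$-module with the same annihilator as $H^i_{S_+}(S)$. Hence
$$J_i = \sqrt{\ann_S E_i}, \qquad \dim S/J_i = \dim E_i .$$
So it suffices to show that every prime $\q \in \operatorname{Supp}_S E_i$ satisfies $\dim S/\q \leq i-2$.

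\emph{The localization step, which I expect to be the main point.} Let $\q$ be a prime with $\height \q = h$. Since $S$ is a domain finite over the polynomial ring $D$, it is equidimensional and catenary, so $\dim S/\q = d-h$. Localizing the duality at $\q$ (via $\q \cap D$, using the canonical module of $S_\q$), one gets
$$(E_i)_\q \neq 0 \iff H^{\,i-(d-h)}_{\q S_\q}(S_\q) \neq 0 .$$
Now suppose $\dim S/\q > i-2$, that is, $h < d-i+2$, and set $j = i - d + h$, so $j < 2$.
\begin{itemize}
\item If $h \geq 2$, then $\depth S_\q \geq 2 > j$ because $S$ satisfies $S_2$.
\item If $h \leq 1$, then $j \leq i-d+1 \leq 0$. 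Here $S_\q$ is a field or a DVR, and $\depth S_\q = h$. For $h = 0$ we have $j \le -1 < 0 = \depth S_\q$. For $h = 1$ either $j < 0$ or $j = 0 < 1 = \depth S_\q$.
\end{itemize}
In every case $j < \depth S_\q$, so $H^j_{\q S_\q}(S_\q) = 0$ and $\q \notin \operatorname{Supp} E_i$. Hence $\dim E_i \leq i-2$, which is the claim.

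The care needed is in justifying the localized duality statement for the prime $\q$ of $S$ lying over $\q \cap D$, including the index shift by $\dim S/\q$. This uses that $E_i$ localizes to $\Ext^{d-i}$ into the canonical module of $S_\q$ and that $\height \q + \dim S/\q = d$.
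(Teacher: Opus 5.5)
Your proposal is correct. For $3\le i\le d-1$ it takes a genuinely different route from the paper.

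For $i\le 2$ you argue exactly as the paper does: depth $\ge 3$ follows from the Ellingsrud--Skjelbred bound together with $V^P\neq 0$. You are also right that ``$J_i=R$'' is a typo for $J_i=S$.

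For $i\ge 3$ the paper runs the Ellingsrud--Skjelbred spectral sequences with $\A=S_+$ and $M=R$. Because $R$ is Cohen--Macaulay, $(\beta)$ gives $H^n_{S_+}(G,R)=0$ for $n<d$. So $E_2^{j,0}=H^j_{S_+}(S)$ must be killed by differentials out of subquotients of $H^{j-r}_{S_+}(H^{r-1}(G,R))$. The appendix result \ref{dual-dim} and a descending induction on the page then give $\dim H^j_{S_+}(S)^\vee\le j-2$.

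You instead pass to the deficiency modules $\Ext^{d-i}_D(S,D)$. You use that $\mathbf{R}\operatorname{Hom}_D(S,D)$ is a dualizing complex for $S$, and that its localization at $\q$ is one for $S_\q$ shifted by $\dim S/\q=d-\height\q$. What you prove is the standard bound $\dim K^i(S)\le i-k$ for $i<d$, valid for an equidimensional ring satisfying $S_k$, here with $k=2$. Your case check is right, including that $j=0$ occurs only when $h=1$ and $i=d-1$.

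What your route buys: for $i\ge 3$ the statement uses nothing about $G$. It holds for any normal, indeed any $S_2$, affine graded domain, and invariant theory enters only to kill $H^2_{S_+}(S)$. You also avoid both the spectral sequences and the appendix. The price is the localized duality step, which you correctly flag; it needs $S$ equidimensional and catenary, which holds here.

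What the paper's route buys: it uses that $S$ is the invariant ring of the Cohen--Macaulay ring $R$. It presents $H^j_{S_+}(S)$ as built from subquotients of $H^p_{S_+}(H^q(G,R))$ with $q\ge 1$. This ties $J_j$ to the group cohomology and could be sharpened with support information on the $H^q(G,R)$. For the bound as stated, however, it is heavier machinery than needed.
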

As an application of Theorem \ref{main} we obtain
\begin{corollary}
\label{ann-coho-cor}(with hypotheses as in \ref{coho-ann}). We have $(\db_{d,0}, \db_{d, 1}, \db_{d,2}) \subseteq J_i$ when $0 \leq i \leq d-1$.
\end{corollary}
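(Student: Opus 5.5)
The plan is to derive the corollary directly from Theorem \ref{coho-ann} and Theorem \ref{main} (with $I = S_+$), splitting into the ranges $0 \leq i \leq 2$ and $3 \leq i \leq d-1$.

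\emph{Case $0 \leq i \leq 2$.} Theorem \ref{coho-ann} gives that $J_i$ is the whole ring. Here the ``$R$'' in that statement must be read as $S$, since $J_i \subseteq S$. So $H^i_{S_+}(S) = 0$ and the inclusion $(\db_{d,0}, \db_{d,1}, \db_{d,2}) \subseteq J_i$ is trivial. The same argument applies to any $i \geq 3$ with $J_i = S$, so below I assume $J_i$ is a proper ideal.

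\emph{Case $3 \leq i \leq d-1$ with $J_i$ proper.} Theorem \ref{coho-ann} gives $\dim S/J_i \leq i - 2 \leq d-3$.
\begin{enumerate}[\rm (a)]
  \item I first check the hypothesis $J_i \neq 0$ of Theorem \ref{main}. The ring $S = R^G$ is a domain of dimension $d$, because it is a subring of the polynomial ring $R$ over which $R$ is integral. Hence $J_i = 0$ would force $\dim S/J_i = d$, contradicting $\dim S/J_i \leq d-3$.
  \item Next I write $\dim S/J_i = d - g$. Then $g \geq 3$.
  \item Finally I apply Theorem \ref{main} to $J_i$ with this $g$. It yields $\db_{d,0}, \ldots, \db_{d,g-1} \in J_i$. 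Since $g - 1 \geq 2$, this list contains $\db_{d,0}, \db_{d,1}, \db_{d,2}$.
\end{enumerate}
Because $J_i$ is an ideal, it then contains the ideal these three elements generate, which is the claim.

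There is no serious obstacle, since all the work is in the two quoted theorems. The only points needing care are the bookkeeping ones: verifying $J_i \neq 0$ from the dimension bound, which uses that $S$ is a domain, and handling $J_i = S$ separately, since then $\dim S/J_i$ is not a nonnegative integer and Theorem \ref{main} is not needed.
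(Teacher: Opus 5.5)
Your proof is correct and follows the paper's route: combine Theorem \ref{coho-ann} with Theorem \ref{main} for $I = S_+$, where $J_i = S$ for $i \leq 2$, and $\dim S/J_i \leq i-2 \leq d-3$ forces $g \geq 3$ for $3 \leq i \leq d-1$. Your extra bookkeeping fills in details the paper leaves implicit: reading the ``$R$'' as $S$, treating $J_i = S$ separately, and checking $J_i \neq 0$ via $\dim S = d$.
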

In \cite{P}, it is  proved that $\db_{d,0} \in J_i$ when $0 \leq i \leq d-1$. We obtain a curious consequence of Corollary \ref{ann-coho-cor}.
\begin{corollary}\label{khy}
(with hypotheses as in \ref{setup}). Let $P$ be a   prime ideal in $S$ such that $S_P$ is NOT \CM. Then $(\db_{d,0}, \db_{d, 1}, \db_{d,2}) \subseteq P$.
\end{corollary}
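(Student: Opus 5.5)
We reduce Corollary \ref{khy} to Corollary \ref{ann-coho-cor} by localizing at $P$ and using the standard link between local cohomology and Cohen--Macaulayness.

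First suppose $P$ is homogeneous. Let $\mathfrak{m} = S_+$. For a finitely generated graded module, $S_P$ is \CM\ for every homogeneous prime $P \neq \mathfrak{m}$ exactly when $H^i_{\mathfrak{m}}(S)$ has finite length for all $i < d$. More precisely, by local duality over a polynomial subring (for instance $D^*(d)$, over which $S$ is finite), write $E^j = \Ext^{j}(S, \omega)$. Then $H^i_{\mathfrak{m}}(S)$ is the graded Matlis dual of $E^{d-i}$. Hence $\ann H^i_{\mathfrak m}(S) = \ann E^{d-i}$, and $\sqrt{\ann}$ of a finitely generated module is the intersection of the primes in its support. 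Since $S$ is a domain, it is equidimensional and catenary. So $S_P$ fails to be \CM\ exactly when $(E^{j})_P \neq 0$ for some $j \geq 1$ with $d-j < d$, i.e.\ for some $i = d-j \leq d-1$. Thus $P \supseteq \ann E^{d-i} = \ann H^i_{\mathfrak m}(S)$, so $P \supseteq J_i$ for some $0 \le i \le d-1$.

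By Corollary \ref{ann-coho-cor}, $(\db_{d,0},\db_{d,1},\db_{d,2}) \subseteq J_i \subseteq P$. Two hypotheses need care. Corollary \ref{ann-coho-cor} assumes $d \ge 4$. For $d \le 3$, $S$ is normal of dimension $\le 3$, so $S_2$ gives \CM\ at all primes of height $\le 2$, and the non-\CM\ locus is contained in $\{\mathfrak m\}$. Kemper's theorem \cite[3.1]{K} then forces it to be empty, so the statement is vacuous. For $d \ge 4$ the corollary applies.

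Now suppose $P$ is not homogeneous. Let $P^*$ be the largest homogeneous prime contained in $P$. It is standard for graded rings that $S_P$ is \CM\ if and only if $S_{P^*}$ is. Hence $S_{P^*}$ is not \CM, so $(\db_{d,0},\db_{d,1},\db_{d,2}) \subseteq P^* \subseteq P$.

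The main obstacle is the step identifying the non-\CM\ locus with the union of the supports $V(\ann H^i_{\mathfrak m}(S))$, $i<d$. This requires the duality bookkeeping: matching indices $i \leftrightarrow d-i$, and using equidimensionality so that $\dim S_P + \dim S/P = d$, which ensures the local criterion at $P$ matches the global Ext modules. Everything else follows formally from Corollary \ref{ann-coho-cor}.
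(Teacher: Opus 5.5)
Your proposal is correct and is essentially the paper's argument. You reduce to homogeneous $P$ via $P^*$, use graded local duality over $D^*(d)$ to see that the failure of the \CM\ property at $P$ forces $P \supseteq J_i$ for some $i \leq d-1$, and then apply Corollary \ref{ann-coho-cor}; the paper runs the same step as a contradiction, showing $\Ext^i_D(S,D(u))_{\q}=0$ for $i>0$ with $\q = P\cap D$, so that $S_{\q}$, and hence $S_P$, is \CM. Your explicit treatment of $d \leq 3$ via normality and Kemper's theorem covers the case where Corollary \ref{ann-coho-cor} does not apply, which the paper leaves implicit.
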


Annhilators of local cohomology modules give a lot of information on annhilators of cohomology in many constructs. For $0 \leq j \leq d - 1$, let $a_j \geq 0$ be the smallest power of $\db$ such that $(\db_{d_0}, \db_{d,1}, \db_{d,2})^{a_j}\in \ann_S H^j_{S_+}(S)$. Set $\q_i = \prod_{j = 0}^{i} (\db_{d_0}, \db_{d,1}, \db_{d,2})^{a_j}$ for $0 \leq j \leq d -1$. Our first application is a precise version of a result of a result of Roberts, see \cite[Theorem 1]{R} (also see \cite[8.1.2]{BH}).
\begin{corollary}
\label{rob} (with hypotheses as in \ref{setup}). Let $\G = 0 \rt G^0 \rt G^1 \rt \ldots \rt G^r \rt 0$ be a complex of free graded $S$-modules and homogeneous maps such that $H^i(\G)$ has finite length for all $i$. Then $\q_i \in \ann_S H^i(\G)$ for $0 \leq i \leq d-1$.
\end{corollary}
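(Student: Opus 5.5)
The plan is to compare $H^i(\G)$ with the local cohomology modules $H^j_{S_+}(S)$ for $j \leq i$ via a double complex, in the spirit of Roberts' argument (cf. \cite[8.1.2]{BH}). The annihilators of $H^j_{S_+}(S)$ are then supplied by Corollary \ref{ann-coho-cor} through the definition of the $a_j$.

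Let $x_1, \ldots, x_n$ be homogeneous generators of $S_+$ and let $C^\bullet$ be the \v{C}ech complex of $S$ on $x_1,\ldots,x_n$. Form the double complex $C^\bullet \otimes_S \G$ with total complex $T$, and compute its cohomology with the two spectral sequences.

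\emph{First filtration.} Fix the \v{C}ech degree $a$ and take cohomology along $\G$ first. For $a \geq 1$ each module $C^a$ is a direct sum of localizations $S_{x_{j_1}\cdots x_{j_a}}$, and each such localization is flat. So the column cohomology is $H^b(\G)_{x_{j_1}\cdots x_{j_a}}$. Since $H^b(\G)$ has finite length and is graded, it is supported only at $S_+$, so this localization vanishes. Hence only the column $a=0$ survives, the spectral sequence degenerates, and $H^i(T) \cong H^i(\G)$.

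\emph{Second filtration.} Fix the degree $p$ of $\G$ and take \v{C}ech cohomology first. Since $G^p$ is a finite free graded module, $H^q(C^\bullet \otimes G^p) = H^q_{S_+}(G^p) \cong H^q_{S_+}(S)^{\oplus \beta_p}$, up to shifts. So $E_1^{p,q}$ is a finite direct sum of copies of $H^q_{S_+}(S)$. Every $E_\infty^{p,q}$ is a subquotient of $E_1^{p,q}$ and is therefore annihilated by $\ann_S H^q_{S_+}(S)$. The abutment $H^i(T) \cong H^i(\G)$ has a finite filtration whose successive quotients are the $E_\infty^{p,q}$ with $p+q=i$. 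Since $p \geq 0$, only $0 \leq q \leq i \leq d-1$ occur, so there are at most $i+1$ factors, with $q = i, i-1, \ldots, 0$.

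A module with a finite filtration whose $k$-th factor is killed by an ideal $\A_k$ is killed by $\prod_k \A_k$. By Corollary \ref{ann-coho-cor} and the definition of $a_q$, the ideal $(\db_{d,0},\db_{d,1},\db_{d,2})^{a_q}$ lies in $\ann_S H^q_{S_+}(S)$ for each $0 \leq q \leq d-1$. Taking the product over $q = 0, \ldots, i$ gives $\q_i \subseteq \ann_S H^i(\G)$ for $0 \leq i \leq d-1$. The restriction $i \leq d-1$ matters precisely because $H^d_{S_+}(S)$ is never needed.

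The main obstacle is the bookkeeping in the spectral sequence argument. One must check that the first filtration really collapses, using flatness of localization together with the finite length hypothesis. One must also check that the index range forces $q \leq i$, which depends on $\G$ starting in degree $0$. Both filtrations are bounded, since $C^\bullet$ and $\G$ are bounded complexes, so convergence is not an issue.
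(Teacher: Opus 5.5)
Your proof is correct and follows the paper's route: the paper just cites the Roberts-type result \cite[8.1.2]{BH} in graded form, with $(\db_{d,0},\db_{d,1},\db_{d,2})^{a_j} \subseteq \ann_S H^j_{S_+}(S)$ supplied by Corollary \ref{ann-coho-cor}, and your \v{C}ech double-complex comparison is the standard proof of that result, with the index bookkeeping correct. One small remark: the $G^p$ need not be finitely generated for your second spectral sequence, because $H^q_{S_+}$ commutes with arbitrary direct sums.
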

Our next application is a precise version of a result due to Schenzel, see \cite[Theorem 2]{Sc}  (also see \cite[8.1.4]{BH}).
\begin{corollary}
\label{Sch}(with hypotheses as in \ref{setup}). Let $\xb = x_1,\dots, x_d$ be a homogeneous system of parameters of $S$. Then
$$ \q_{d-1} \in \ann_S \frac{(x_1, \ldots x_{t-1})\colon x_t}{(x_1, \ldots, x_{t-1})} \quad \text{for t = 1, \ldots, d}.$$
\end{corollary}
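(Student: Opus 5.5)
The plan is to reduce the statement to Schenzel's theorem in the form given in \cite[8.1.4]{BH}, in the same way that Corollary \ref{rob} is a refinement of Roberts' theorem. No new homological input about $S$ should be needed.

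\emph{Step 1 (the exponents $a_j$ exist).} By Corollary \ref{ann-coho-cor}, $(\db_{d,0}, \db_{d,1}, \db_{d,2}) \subseteq J_j = \sqrt{\ann_S H^j_{S_+}(S)}$ for $0 \leq j \leq d-1$. Since $(\db_{d,0}, \db_{d,1}, \db_{d,2})$ is finitely generated, some power of it lies in $\ann_S H^j_{S_+}(S)$. So each $a_j$ is finite, and by definition
$$\q_{d-1} = \prod_{j=0}^{d-1} (\db_{d,0}, \db_{d,1}, \db_{d,2})^{a_j} \subseteq \prod_{j=0}^{d-1} \ann_S H^j_{S_+}(S).$$

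\emph{Step 2 (Schenzel's theorem).} For a finitely generated graded module $M$ of dimension $d$ over the graded ring $S$ with homogeneous maximal ideal $S_+$, and a homogeneous system of parameters $x_1, \ldots, x_d$, the product $\prod_{j=0}^{d-1}\ann H^j_{S_+}(M)$ annihilates $((x_1,\ldots,x_{t-1})\colon x_t)/(x_1,\ldots,x_{t-1})$ for every $t$. Taking $M = S$ and combining with Step 1 gives the corollary. The graded case follows from the local case after localizing at $S_+$, because the local cohomology and the colon quotients are compatible with this localization.

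\emph{Step 3 (sketch of Schenzel's argument, for the main step).} If we reprove rather than quote, we argue by induction on $t$ with $M_t = S/(x_1, \ldots, x_{t-1})$.

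\begin{enumerate}[\rm (1)]
\item Write $\mathfrak{b}_j(M) = \ann H^j_{S_+}(M)$. The module $(0 :_{M_t} x_t)$ is the quotient we must annihilate.
\item Pass to $\overline{M_t} = M_t / H^0_{S_+}(M_t)$, on which $x_t$ is a parameter and $(0:_{\overline{M_t}} x_t)$ has finite length. The natural bound to aim for is
$$(0:_{M_t} x_t)\cdot \mathfrak{b}_0(M_t)\, \mathfrak{b}_1(M_t) = 0 .$$
\item From the local cohomology sequence of $0 \rt \overline{M} \xrightarrow{x} \overline{M} \rt \overline{M}/x\overline{M} \rt 0$, obtain
$$\mathfrak{b}_j(M)\,\mathfrak{b}_{j+1}(M) \subseteq \mathfrak{b}_j(M/xM),$$
with a correction from $H^0$ absorbed by one more factor.
\item Iterating along $x_1, \ldots, x_{t-1}$ telescopes into the product $\prod_{j<t}\mathfrak{b}_j(S)$, which divides the full product over $j \leq d-1$.
\end{enumerate}

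The main obstacle is the bookkeeping of these products. Each step multiplies annihilators, and one must check that, after telescoping, every $\mathfrak{b}_j(S)$ occurs at most once. This is exactly the content of \cite[8.1.4]{BH}, so in the write-up I would simply invoke it after Step 1.
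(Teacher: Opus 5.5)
Your Steps 1--2 are the argument the paper intends (it gives no separate proof and just points to \cite[Chapter 8]{BH}): Corollary \ref{ann-coho-cor} makes every $a_j$ finite, so $\q_{d-1}\subseteq\prod_{j=0}^{d-1}\ann_S H^j_{S_+}(S)$, and Schenzel's theorem \cite[8.1.4]{BH}, transferred to the graded setting by localizing at $S_+$ as you describe, finishes the proof. Rely on that citation and not on your Step 3 sketch, which does not work as written: $0:_{\overline{M_t}}x_t$ need not have finite length once $S$ is not Cohen--Macaulay, because $x_t$ may lie in a positive-dimensional embedded prime of $S/(x_1,\ldots,x_{t-1})$; and iterating $\mathfrak{b}_j(M)\mathfrak{b}_{j+1}(M)\subseteq\mathfrak{b}_j(M/xM)$ gives binomial multiplicities such as $\mathfrak{b}_0\mathfrak{b}_1^2\mathfrak{b}_2$, not each $\mathfrak{b}_j(S)$ once.
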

Finally we give a precise version of a result in \cite[8.1.3]{BH}.
\begin{corollary}
\label{Sch-bh}(with hypotheses as in \ref{setup}).
Let $\xb = x_1,\dots, x_m$ be a homogeneous set of elements in $S$ such that $\codim (\xb) = m$. Then for $i = 1, \ldots, m$ we get that $\q_{d-i}$ is in the annhilator of the Koszul homology $H_i(\xb)$.
\end{corollary}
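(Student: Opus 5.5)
The plan is to reduce to the case where $\xb$ is a full homogeneous system of parameters. In that case the Koszul complex is a complex of free graded modules with finite length homology, so Corollary \ref{rob} applies directly. The reduction will use the long exact Koszul sequence together with graded Nakayama, and this is the step I expect to be the main obstacle.

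\emph{Step 1 (extension to a system of parameters).} Since $\codim(\xb) = m$ and $S$ is a positively graded domain of dimension $d$, I will choose homogeneous elements $x_{m+1}, \ldots, x_d$ so that $x_1, \ldots, x_d$ is a homogeneous system of parameters. Graded prime avoidance is available even over the finite field $\F_q$, after replacing elements by suitable powers to equalize degrees.

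\emph{Step 2 (the case $m = d$).} Let $\xb = x_1, \ldots, x_d$ be a homogeneous system of parameters and $K_\bullet(\xb)$ its Koszul complex. Then each $H_i(\xb)$ is annihilated by a power of $(\xb)$, hence has finite length. I regrade $K_\bullet(\xb)$ as a cochain complex $\G$ with $G^j = K_{d-j}(\xb)$, which is a complex of free graded $S$-modules with homogeneous maps. Then $H^{d-i}(\G) = H_i(\xb)$, and Corollary \ref{rob} gives $\q_{d-i} \subseteq \ann_S H_i(\xb)$ for $1 \le i \le d$. For $i = d$ this reads $\q_0$, which is also consistent with $H_d(\xb) = 0$.

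\emph{Step 3 (descending induction on $m$).} Suppose the claim holds for every homogeneous sequence of length $m+1$ with codimension $m+1$. Let $\xb = x_1, \ldots, x_m$ and $y = x_{m+1}$, and for every $N \ge 1$ consider the sequence $\xb, y^N$, which still has codimension $m+1$. The standard exact sequence
$$0 \rt H_i(\xb)/y^N H_i(\xb) \rt H_i(\xb, y^N) \rt (0 :_{H_{i-1}(\xb)} y^N) \rt 0$$
embeds $H_i(\xb)/y^N H_i(\xb)$ into $H_i(\xb, y^N)$. By induction, $H_i(\xb, y^N)$ is annihilated by $\q_{d-i}$, and this ideal does not depend on $N$. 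Hence $\q_{d-i} H_i(\xb) \subseteq \bigcap_N y^N H_i(\xb)$. The module $H_i(\xb)$ is finitely generated and graded, and $y$ has positive degree, so by the graded Krull intersection theorem (degree reasons) this intersection is $0$. Therefore $\q_{d-i}$ annihilates $H_i(\xb)$.

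The main obstacle is making Step 3 uniform in $N$. The point to check is that the bound supplied by Corollary \ref{rob} depends only on the homological index, through the fixed ideals $\q_j$, and not on the particular sequence. This is what makes the Krull intersection argument legitimate.
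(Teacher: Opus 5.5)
Your proof is correct and takes essentially the approach the paper intends. The paper gives no separate argument and only defers to the standard reasoning of \cite[Chapter 8]{BH}: apply Roberts' theorem (here Corollary \ref{rob}) to the Koszul complex of a homogeneous system of parameters, then pass to a partial system of parameters via the embedding $H_i(\xb)/y^N H_i(\xb) \rt H_i(\xb, y^N)$ and Krull's intersection theorem, which is exactly what you do, including your point that the $\q_j$ depend only on $S$, so the bound is uniform in $N$.
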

 \s \emph{Technique to prove our results:}
 A homogeneous  ideal $I$ of ($R$ or $S$) is said to be $\Pc^*$-invariant if $\Pc^i(I) \subseteq I$ for all $i \geq 0$.  We note that if $I$ is $\Pc^*$-invariant ideal in $S$ then so is $\sqrt{I}$, see \cite[11.2.5]{SL}. It is easily proved that if $I$ is a non-zero $\Pc^*$-invariant ideal of $S$ with $\dim S/\sqrt{I} = d - g$ then $\db_{d,0}, \ldots, d_{d, g -1} \in \sqrt{I}$, see \ref{bella}.
\begin{definition}
By a Cartan $S$-module we mean a $S$-module $M$ equipped with $\F_q$-linear maps $\Qc^i_M \colon M \rt M$ for $i \geq 0$ with $\Qc^0_M $ being the identity such that for any $s \in S$ and $m \in M$ we have for all $r \geq 1$
$$ \Qc^r_M(s\alpha) = \sum_{i+j = r}\Pc^i(s)\Qc^j_M(\alpha).$$
\end{definition}

 It is easily seen that if $M$ is a Cartan $S$-module then $\ann_S M$ is a $\Pc^*$-invariant ideal of $S$, see \ref{ann-p*}. Theorem \ref{main} is a consequence of the following result:
\begin{theorem}\label{chief-intro}
(with setup as in \ref{setup}) Let $I$ be a homogeneous ideal in $S$. Let $I = (x_1, \ldots, x_m)$ with $x_i$ homogeneous. Fix $i \geq 0$. Then there exists a Cartan $S$-module structure on $H^i_I(S)$ (depending on $\xb$).
\end{theorem}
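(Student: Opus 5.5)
We compute $H^i_I(S)$ with the Čech complex $C^\bullet(\xb;S)$ on $x_1,\ldots,x_m$, whose terms are direct sums of localizations $S_{x_F}$, $x_F=\prod_{j\in F}x_j$, $F\subseteq\{1,\ldots,m\}$. The plan has three steps. First, put a Cartan structure on each $S_{x_F}$. Second, check that the Čech differentials commute with these operators. Third, pass to cohomology, which then inherits a Cartan structure. The structure depends on $\xb$ because it is built from this particular complex.

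\emph{Step 1: Cartan structure on localizations.} Recall that the total Steenrod operation $\Pc=\sum_{i\geq 0}\Pc^i t^i\colon S\rt S[[t]]$ (or $S[t]$) is a ring homomorphism; this is the Cartan formula, see \cite[Chapter 11]{SL}. Since $\Pc(x)=x+(\text{terms in } t)$ is a unit in $S_x[[t]]$, the universal property of localization extends $\Pc$ uniquely to a ring homomorphism $\Pc_x\colon S_x\rt S_x[[t]]$. Explicitly,
$$\Pc_x(s/x^n)=\Pc(s)\,\Pc(x)^{-n},\qquad \Pc(x)^{-1}=x^{-1}\sum_{k\geq 0}(-1)^k\bigl(x^{-1}(\Pc(x)-x)\bigr)^k.$$
For homogeneous $x$ the coefficient of $t^r$ is a finite sum, because $\Pc(x)-x$ has only finitely many nonzero coefficients. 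We then define $\Qc^r$ on $S_{x_F}$ as the $t^r$-coefficient of $\Pc_{x_F}$. Multiplicativity of $\Pc_{x_F}$ gives
$$\Qc^r(s\alpha)=\sum_{i+j=r}\Pc^i(s)\Qc^j(\alpha)\quad\text{for } s\in S,$$
and $\Qc^0=\mathrm{id}$ because the $t^0$-coefficient of $\Pc$ is the identity. So each $S_{x_F}$ is a Cartan $S$-module. Note that these $\Qc^r$ are only $\F_q$-linear, not $S$-linear.

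\emph{Step 2: compatibility with the differentials.} The Čech differentials are signed sums of the natural localization maps $S_{x_F}\rt S_{x_{F\cup\{j\}}}$. Both $\Pc_{x_{F\cup\{j\}}}\circ\lambda$ and $\lambda[[t]]\circ\Pc_{x_F}$ are ring homomorphisms $S_{x_F}\rt S_{x_{F\cup\{j\}}}[[t]]$ that agree on $S$. By uniqueness in the universal property they are equal. Hence each $\Qc^r$ commutes with the differentials, and so it induces $\F_q$-linear maps on $H^i(C^\bullet)=H^i_I(S)$.

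\emph{Step 3: passing to cohomology.} The Cartan identity descends to $H^i_I(S)$. If $z$ is a cocycle and $s\in S$, then $\Qc^r(sz)=\sum\Pc^i(s)\Qc^j(z)$ already holds at the cocycle level. Since each $\Qc^j$ maps boundaries to boundaries, the identity holds in cohomology. This gives the Cartan $S$-module structure on $H^i_I(S)$.

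\emph{Expected obstacle.} The main subtlety is the well-definedness and finiteness in Step 1. We need the inverse of $\Pc(x)$ to exist, and each $t^r$-coefficient to be a genuine element of $S_x$ rather than an infinite sum; working in power series $S_x[[t]]$ handles both points. A second point is that the $\Pc^i$ are defined on $S$ itself, that is, $S$ is closed under the Steenrod operations. This holds because $G$ commutes with the $\Pc^i$ on $R$ \cite[Chapter 11]{SL}.
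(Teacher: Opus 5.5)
Your proof is correct and has the same skeleton as the paper's. Both use the \v{C}ech complex on $\xb$, put a Cartan structure on each $S_{x_F}$, check compatibility with the localization maps, and pass to cohomology as in Corollary \ref{complex-cart}. The structure on the direct sums $C^t$ is taken componentwise; the paper records this in Propositions \ref{add} and \ref{map-add}, and you leave it implicit.

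The difference is in Steps 1 and 2. The paper defines $\Qc^r$ on $S_x$ by the recursion $\Qc^r(a/x^m)=x^{-m}\bigl(\Pc^r(a)-\sum_{i+j=r,\,j<r}\Pc^i(x^m)\Qc^j(a/x^m)\bigr)$. It then verifies independence of the representation, additivity, $\F_q$-linearity, the Cartan formula and uniqueness by separate inductions (Theorem \ref{loc}), and compatibility with $S_x\rt S_{xy}$ by a further computation (Theorem \ref{maps}).

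Your operators are the same ones, since that recursion is exactly the comparison of $t^r$-coefficients in $\Pc(x^m)\,\Pc_x(u)=\Pc(a)$. Packaging them into the ring map $\Pc_x\colon S_x\rt S_x[[t]]$ from the universal property of localization makes all of those checks immediate. In particular, $\F_q$-linearity holds because $\Pc$ fixes $\F_q$, a point you assert without saying why. Your Step 2 replaces the computation of Theorem \ref{maps} by the fact that ring maps out of $S_{x_F}$ are determined by their restriction to $S$.

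Even the paper's stronger uniqueness, among all Cartan structures extending $\Pc^*$ and not only multiplicative ones, is one line in your setting. Such a total operation $H$ satisfies $\Pc(a)=\Pc(x^m)H(u)$, hence $H(u)=\Pc(x^m)^{-1}\Pc(a)$.

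So the paper's route is elementwise and avoids power series, at the cost of length. Yours is shorter and conceptual, and it also shows that the operators are multiplicative on all of $S_x$.

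Your ``expected obstacle'' is not one. Coefficients in $S_x[[t]]$ are automatically well defined. Since $\Pc(x)-x\in tS[t]$, only finitely many terms contribute to each coefficient, for any $x$, homogeneous or not.
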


\begin{remark}
Previously unstable $\Pc^*$-modules where studied, for instance see \cite[8.5]{NS}. Recall a graded Cartan module $M$ is said to be unstable if $\Pc^i(m) = 0$ for all homogeneous $m \in M$ with $i > \deg m$.  We note that usually most of $H^i_{S_+}(S)$ is concentrated in negative degrees. So unstable structures might not exist  for local cohomology modules.
\end{remark}

Our proof of Landweber-Stong conjecture follows from Theorem \ref{main} and  the structure of $\Pc^*$-invariant prime  ideals of $D^*(d)$. For applications II and III we use a curious property of homogeneous prime ideals $P$ which are \textit{not} $\Pc^*$-invariant: if $H^i_P(S)$ is a finitely generated $S$-module then $H^i_P(S)_P = 0$. Theorem \ref{coho-ann} is an application of the Ellingsrud-Skjelbred spectral sequences \cite{ES}. The applications \ref{rob}, \ref{Sch} and \ref{Sch-bh} follows from standard results as given in \cite[Chapter 8]{BH}.

We now describe in brief the contents of this paper.  In section two we discuss a few preliminary results that we need. In Section three we discuss our notion of Cartan modules and discuss a few preliminary properties. In section four we discuss a Cartan $S$-module structure on $S_x$ where $x \in S$ is homogeneous.  In section five we discuss Cartan $S$-module structure on local cohomology modules.   In the next section we give a proof of Theorem \ref{main}. In section seven we give our proof of Landweber-Stong conjecture. In the next section we give proof of our applications II. In section nine we give a proof of Theorem \ref{fg-almost}. Finally in section ten we give a proof of Theorem \ref{coho-ann} and Corollary \ref{khy}.
In the appendix we discuss a result that we need.
\section{Preliminaries}
In this section we discuss a few preliminary results that we need. \\
\textbf{I:} \emph{Ellingsrud-Skjelbred spectral sequences.}

In this sub-section we describe the Ellingsrud-Skjelbred spectral sequences \cite{ES} (we follow the exposition given in \cite[8.6]{Lorenz}).
\s Let $S$ be a commutative Noetherian ring. Let $Mod(A)$ be the category of left $A$-modules.

(1) Let $G$ be a finite group. Let $S[G]$ be the group ring and let $Mod(S[G])$ be the category of left $S[G]$-modules.

(2) Let $\A$ be an ideal in $S$. Let $\Gamma_\A(-)$ be the torsion functor associated to $\A$.  Let $H^n_\A(-)$ be the $n^{th}$ right derived functor of
$\Gamma_\A(-)$. Usually $H^n_\A(-)$ is called the $n^{th}$ local cohomology functor of $A$ \wrt \ $\A$.

(3) If $M \in Mod(S[G])$ then note $\Gamma_\A(M) \in Mod(S[G])$.

\s Ellingsrud-Skjelbred spectral sequences  are constructed as follows: Consider the following sequence of functors
\[
(i) \quad   Mod(S[G]) \xrightarrow{(-)^G} Mod(S) \xrightarrow{\Gamma_\A} Mod(S),
\]
\[
(ii)   \quad   Mod(S[G]) \xrightarrow{\Gamma_\A} Mod(S[G]) \xrightarrow{(-)^G} Mod(S)
\]
We then notice
\begin{enumerate}[\rm (a)]
\item
The above compositions are equal.
\item
It is possible to use Grothendieck spectral sequence of composite of functors to both (i) and (ii) above; see
\cite[8.6.2]{Lorenz}.
\end{enumerate}
Following Ellingsrud-Skjelbred  we let $H^n_\A(G,-)$ denote the $n^{th}$ right derived functor of this composite
functor. So by (i) and (ii) we have two first quadrant spectral sequences for each $S[G]$-module $M$
\[
(\alpha)\colon \quad \quad  E_2^{p,q} = H^p_\A(H^q(G, M)) \Longrightarrow H^{p+q}_\A(G,M), \ \text{and}
\]
\[
(\beta)\colon \quad \quad \mathcal{E}_2^{p,q} = H^p(G, H^q_\A(M)) \Longrightarrow H^{p+q}_\A(G,M).
\]
\begin{remark}\label{grading}
(1) If $S$ is $\mathbb{N}$-graded ring then $S[G]$ is also $\mathbb{N}$-graded (with $\deg \sigma = 0$ for all $\sigma \in G$).

(2)  If $M$ is a finitely generated graded left $S[G]$-module then $H^n(G, M)$ are finitely generated graded $S$-module. This can be easily seen by taking a graded free resolution of $S$ consisting of finitely generated free  graded $S[G]$-modules.

(3) Ellingsrud-Skjelbred spectral sequences have an obvious graded analogue.
\end{remark}

\textbf{III:} \emph{Steenrod operators}.

We follow the exposition given in \cite{SL}.
\s Let $p$ be a prime, $q = p^s$ and let $V$ be a $\F_q$-vector space of dimension $d$. Define
Define $P(\xi)\colon \F_q[V] \rt \F_q[V][\xi]$ (with $\deg \xi = 1-q$) by the rules:
\begin{enumerate}
  \item $P(\xi)(v)  = v + v^q\xi$  for all $v \in  V^*$.
  \item $P(\xi)(u + w) = P(\xi)(u) + P(\xi)(w)$ for all $u,w \in F_q[V]$.
  \item $P(\xi)(uw) = P(\xi)(u)P(\xi)(w)$ for all $u,w \in F_q[V]$.
  \item $P(\xi)(1) = 1$.
   \end{enumerate}
  We note that $P(\xi)$ is a ring homomorphism of degree zero.

By separating out homogeneous components we obtain $\F_q$ linear maps \\ $\Pc^i \colon \F_q[V] \rt \F_q[V]$ by the requirement
$$P(\xi)(f) = \sum_{i \geq 0}\Pc^i(f)\xi^i.$$
Note $\Pc^0$ is the identity.
The operations $\Pc^i$ are called the \emph{Steenrod reduced power operations} over $\F_q$ and when $p = 2$, also denoted by $\Sq^i$ and refereed to as \emph{Steenrod squaring operations}.
We will use the following property:
$$\Pc^k(uv) = \sum_{i+j = k}\Pc^i(u)\Pc^j(v). $$

\s Let $G \subseteq GL(V)$ be a finite group. Let $S = \F_q[V]^G$. We define an action of $G$ on $\F_q[V][\xi]$ by giving the trivial action on $\xi$. We note that $\sigma(P(\xi)(r)) = P(\xi)(\sigma(r))$ for all $\sigma \in G$. Thus the maps $\Pc^i$ restrict to maps from $S$ to $S$.

\s An ideal $I$ of $S$ is said to be \emph{$\Pc^*$-invariant} if $\Pc^i(I)\subseteq I$ for all $i \geq 0$.

\s \label{dickson} The non-zero $\Pc^*$-prime ideals of $D^*(d)$ are $(\db_{d,0}, \cdots, \db_{d,i})$ for $i = 0, \ldots, d -1$; see \cite[11.4.6]{SL}.
We need the following result.
\begin{lemma}\label{bella}
(with setup as in \ref{setup}). Let $J$ be a non-zero  radical $\Pc^*$-invariant ideal of $S$. If $\dim S/J = d - g$ then $\db_{d,0}, \cdots, \db_{d, g -1} \in J$.
\end{lemma}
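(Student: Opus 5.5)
The plan is to transfer the question to the Dickson algebra $D = D^*(d) = \F_q[\db_{d,0}, \ldots, \db_{d,d-1}]$ and then use the classification \ref{dickson} of $\Pc^*$-invariant primes of $D$.

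Set $J' = J \cap D$. Since the $\Pc^i$ on $S$ restrict to the $\Pc^i$ on $D$, $J'$ is a $\Pc^*$-invariant ideal of $D$. It is radical because $J$ is. It is nonzero, since $J \neq 0$ and $S$ is integral over $D$: a nonzero $x \in J$ satisfies a minimal monic equation over $D$ with nonzero constant term, and that constant term lies in $J \cap D$ because $S$ is a domain.

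Next compare dimensions. The extension $D/J' \hookrightarrow S/J$ is integral, hence
$$\dim D/J' = \dim S/J = d-g.$$
Write $J' = \bigcap P$, an intersection of its minimal primes. I need these minimal primes to be $\Pc^*$-invariant. This is the standard fact that the minimal primes of a $\Pc^*$-invariant radical ideal are $\Pc^*$-invariant (\cite[11.2]{SL}, Landweber's theorem), which I would cite. The main obstacle is making sure that citation applies to ideals of $D$; it does, since $D$ is itself an invariant ring $\F_q[V]^{GL(V)}$.

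By \ref{dickson}, each minimal prime of $J'$ is either $0$ or $(\db_{d,0}, \ldots, \db_{d,i})$ for some $i$. The prime $0$ is excluded because $J' \neq 0$. So every minimal prime has the form $P_i = (\db_{d,0}, \ldots, \db_{d,i})$, with $\dim D/P_i = d-i-1$.

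Since $\dim D/J' = d-g$, every minimal prime $P_i$ of $J'$ satisfies $d-i-1 \leq d-g$, that is, $i \geq g-1$. Hence each such $P_i$ contains $\db_{d,0}, \ldots, \db_{d,g-1}$. Therefore
$$\db_{d,0}, \ldots, \db_{d,g-1} \in \bigcap P_i = J' \subseteq J.$$
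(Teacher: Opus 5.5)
Your proof is correct and follows the paper's route: contract $J$ to $D^*(d)$, use integrality of $D^*(d)/(J\cap D^*(d)) \hookrightarrow S/J$ to get $\dim D^*(d)/(J\cap D^*(d)) = d-g$, and then invoke the classification \ref{dickson} of $\Pc^*$-invariant primes of $D^*(d)$. The only difference is in the last step. The paper cites \cite[11.4.7]{SL} to know that $J\cap D^*(d)$ is prime, and then identifies it exactly as $(\db_{d,0},\ldots,\db_{d,g-1})$ by a height count. You instead go through its $\Pc^*$-invariant minimal primes and a dimension inequality, which yields the required containment directly.
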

\begin{proof}
  We have $D^*(d) \subseteq S$ is a finite extension of normal domains. Then $K =  J\cap D^*(d)$ is a $\Pc^*$-invariant ideal of $D^*(d)$. We note that $S/J$ is a finite integral extension of $D^*(d)/K$. So $\dim D^*(d)/K = d - g$. As $K$ is a radical $\Pc^*$-invariant ideal of $D^*(d)$ it follows that $K$ is a prime ideal, \cite[11.4.7]{SL}. Also $\height K = g$ as $D^*(d)$ is \CM. Therefore $K = (\db_{d,0}, \cdots, \db_{d, g -1})$ by \ref{dickson}. The result follows.
\end{proof}

We will also need the following:
\begin{proposition}\label{going-up} (with setup as in \ref{setup}). Let $P$ be a $\Pc^*$-invariant prime ideal of height $< d - 1$. Then
\begin{enumerate}[\rm (1)]
  \item there exists a $\Pc^*$-invariant prime ideal $Q$ of height $d-1$ with $Q\supseteq P$.
  \item there exists  prime ideal $L$ of height $d-1$ with $L\supseteq P$ and $L$ NOT a $\Pc^*$-invariant prime ideal of $S$.
\end{enumerate}
\end{proposition}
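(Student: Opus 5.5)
The plan is to exploit two facts about $\Pc^*$-invariant primes. First, there are only finitely many of them \cite[11.4.8]{SL}. Second, the Landweber--Serre type result in \cite[Chapter 11]{SL} says that the radical, and hence every minimal prime, of a $\Pc^*$-invariant ideal is again $\Pc^*$-invariant. Throughout, $S$ is an affine graded domain of dimension $d$ with $S/\mathfrak{p}$ finite over $D^*(d)/(\mathfrak{p}\cap D^*(d))$. So for every prime $L$ of $S$ we have $\height L + \dim S/L = d$, and it suffices to produce primes $L \supseteq P$ with $\dim S/L = 1$.

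For (1), let $\height P = g < d-1$. By Lemma \ref{bella}, $P \cap D^*(d) = (\db_{d,0}, \ldots, \db_{d,g-1})$. Consider the ideal
$$\A = P + (\db_{d,0}, \ldots, \db_{d,d-2})S.$$
The ideal $(\db_{d,0},\ldots,\db_{d,d-2})$ is $\Pc^*$-invariant in $D^*(d)$ by \ref{dickson}. By the Cartan formula $\Pc^k(uv)=\sum_{i+j=k}\Pc^i(u)\Pc^j(v)$, its extension to $S$ is $\Pc^*$-invariant, and so is the sum with $P$. Thus $\A$ is a homogeneous $\Pc^*$-invariant ideal.

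Next we compute $\dim S/\A$. On one side, $S/\A$ is finite over $D^*(d)/(\db_{d,0},\ldots,\db_{d,d-2})$, which has dimension $1$, so $\dim S/\A \le 1$. On the other side, $S/\A = (S/P)/(\db_{d,g},\ldots,\db_{d,d-2})$. Here $\dim S/P = d-g$, and we are killing $d-1-g$ homogeneous elements in a graded ring, so $\dim S/\A \ge 1$.

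Hence there is a minimal prime $Q$ of $\A$ with $\dim S/Q = 1$, that is, $\height Q = d-1$, and $Q \supseteq P$. Since $Q$ is a minimal prime of a $\Pc^*$-invariant ideal, $Q$ is $\Pc^*$-invariant by the result quoted from \cite[Chapter 11]{SL}. This is the step I expect to need the most care: one must make sure the cited statement in \cite{SL} indeed gives invariance of minimal primes, and not only of the radical.

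For (2), $T = S/P$ is a Noetherian domain of dimension $d-g \ge 2$. Choose a prime chain in $T$ and pass to a prime $Q' \supseteq P$ with $\dim S/Q' = 2$. The domain $S/Q'$ has dimension $2$, so by Krull's principal ideal theorem it has infinitely many height-one primes: each nonzero nonunit lies in one, and finitely many height-one primes cannot cover the union of all nonzero nonunits by prime avoidance. Each such prime corresponds to a prime $L \supseteq Q' \supseteq P$ of $S$ with $\dim S/L = 1$, hence $\height L = d-1$. Since only finitely many primes of $S$ are $\Pc^*$-invariant \cite[11.4.8]{SL}, some such $L$ is not $\Pc^*$-invariant.
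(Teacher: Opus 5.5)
Your argument is correct, but it differs from the paper's in both parts.

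\textbf{The paper's route.} The paper works entirely through the contraction $K=P\cap D^*(d)=(\db_{d,0},\ldots,\db_{d,g-1})$. For (1) it applies the going-up theorem for $\Pc^*$-invariant primes \cite[11.3.1]{SL} to $K\subseteq(\db_{d,0},\ldots,\db_{d,d-2})$. For (2) it applies ordinary going-up to the explicit non-invariant prime $V=(\db_{d,0},\ldots,\db_{d,d-3},\db_{d,d-1})\supseteq K$, so $L$ is non-invariant because its contraction $V$ is.

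\textbf{Your part (1).} You instead build the invariant ideal $P+(\db_{d,0},\ldots,\db_{d,d-2})S$ and take a one-dimensional minimal prime. This amounts to reproving the needed case of $\Pc^*$-going-up. The step you flagged follows easily from \cite[11.2.5]{SL}. Write $\sqrt{\A}=Q_1\cap\cdots\cap Q_n$ and pick $h\in Q_2\cdots Q_n\setminus Q_1$. For $f\in Q_1$ we have $fh\in\sqrt{\A}$, so $\Pc^k(f)h+\sum_{i<k}\Pc^i(f)\Pc^{k-i}(h)\in Q_1$. Induction on $k$ then gives $\Pc^k(f)\in Q_1$; this is the same trick as in Lemma \ref{ann-p*}.

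\textbf{Your part (2).} Your counting argument avoids the Dickson classification, but it is non-constructive and relies on the deeper finiteness theorem \cite[11.4.8]{SL}. That theorem, as quoted in the paper, concerns homogeneous primes, whereas Krull plus ordinary prime avoidance may give you a non-homogeneous $L$. This still proves the statement under the paper's convention that $\Pc^*$-invariant ideals are homogeneous. However, the later use in Theorem \ref{fg-almost}(3) needs $L$ homogeneous, because Claim 3 there is only established for homogeneous primes. The fix is to take $Q'$ homogeneous and use graded prime avoidance in the two-dimensional graded domain $S/Q'$. Finitely many homogeneous height-one primes cannot contain every homogeneous element of $(S/Q')_+$, so there are infinitely many homogeneous candidates. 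The paper's route gives a homogeneous $L$ just as easily, as a minimal prime of $P+VS$ lying over $V$.
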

\begin{proof}
  We have $D^*(d) \subseteq S$ is a finite extension of normal domains. So $\height \q = \height \q \cap D^*(d)$ for any prime ideal $\q$ of $S$. Let $\height P = g$ with $g < d -1$. We note that $K = P\cap D^*(d)$ is a $\Pc^*$-invariant prime ideal of $D^*(d)$ of height $g$. By \ref{dickson} we get that $K = (\db_{d,0}, \cdots, \db_{d, g -1})$.

  (1) Let $U = (\db_{d,0}, \cdots, \db_{d, d-2})$. Then $U$ is a height $d-1$, $\Pc^*$-invariant prime ideal of $D^*(d)$ containing $K$. By going-up of $\Pc^*$-prime ideals, see \cite[11.3.1]{SL}, there exists a $\Pc^*$-prime ideal $Q$ of $S$ lying above $U$ and containing $P$. Note $\height Q = \height U = d-1$.

  (2) Let $U^\prime  =  (\db_{d,0}, \cdots, \db_{d, d-3})$. Note $U^\prime \supseteq K$. Set \\ $V = (\db_{d,0}, \cdots, \db_{d, d-3}, \db_{d, d-1})$. Then $V$ is NOT a $\Pc^*$-invariant prime ideal of $D^*(d)$, see \ref{dickson}. Note $V$ is a prime ideal of height $d - 1$ containing $K$. By usual going up there exists  prime ideal $L$  lying above $V$ and of height $d-1$ with $L\supseteq P$. Note $\height L = \height V = d -1$. As $V = L \cap D^*(d)$ is not $\Pc^*$-invariant it follows that $L$ is not a $\Pc^*$-invariant ideal.
\end{proof}
\section{Cartan Modules}
In this section we describe our notion of Cartan modules. We work with the setup as in \ref{setup}. Let $\Pc^i \colon S \rt S$ be the $i^{th}$-Steenrod operator on $S$.

\begin{definition}
By a Cartan $S$-module we mean a $S$-module $M$ equipped with $\F_q$-linear maps $\Qc^i_M \colon M \rt M$ for $i \geq 0$ with $\Qc^0_M $ being the identity such that for any $s \in S$ and $m \in M$ we have for all $r \geq 1$
$$ \Qc^r_M(s\alpha) = \sum_{i+j = r}\Pc^i(s)\Qc^j_M(\alpha).$$
\end{definition}
Next we define morphisms of Cartan $S$-modules.
\begin{definition}\label{def-cart-morph}
  Let $M, N$ be Cartan $S$-modules. By a morphism $f \colon M \rt N$ of Cartan $S$-modules we mean a $S$-linear map $f \colon M \rt N$ such that the following diagram commutes for all $i \geq 0$.

\[
  \xymatrix
{
M
\ar@{->}[r]^{f}
\ar@{->}[d]^{\Qc_M^i}
 & N
\ar@{->}[d]^{\Qc_N^i}
\\
M
\ar@{->}[r]^{f}
 & N
\
 }
\]
\end{definition}
The following result follows easily from the definition of morphism of Cartan $S$-modules.
\begin{proposition}
\label{kic} Let $M, N$ be Cartan $S$-modules and let $f \colon M \rt N$ be a morphism of Cartan $S$-modules. Then $\ker f, \image f, \coker f$ are Cartan $S$-modules. Furthermore the natural inclusions $\ker f \rt M$, $\image f \rt N$ and the surjection $N \rt \coker f$ are morphism of Cartan $S$-modules. \qed
\end{proposition}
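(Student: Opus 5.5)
The statement to prove is Proposition \ref{kic}. We are given a morphism $f\colon M\rt N$ of Cartan $S$-modules, i.e.\ an $S$-linear map with $f\circ\Qc^i_M=\Qc^i_N\circ f$ for all $i$. The plan is to define the Cartan operators on $\ker f$, $\image f$ and $\coker f$ by restriction or by passing to the quotient. The Cartan formula and the condition $\Qc^0=\mathrm{id}$ are then inherited automatically.

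\emph{Kernel.} If $m\in\ker f$, then $f(\Qc^i_M(m))=\Qc^i_N(f(m))=\Qc^i_N(0)=0$, using $\F_q$-linearity of $\Qc^i_N$. So $\Qc^i_M$ preserves $\ker f$, and we set $\Qc^i_{\ker f}=\Qc^i_M|_{\ker f}$. These maps are $\F_q$-linear and $\Qc^0_{\ker f}$ is the identity. For $s\in S$ and $\alpha\in\ker f$, the identity
$$\Qc^r(s\alpha)=\sum_{i+j=r}\Pc^i(s)\Qc^j(\alpha)$$
holds in $M$. Each term $\Pc^i(s)\Qc^j(\alpha)$ lies in $\ker f$, because $\ker f$ is an $S$-submodule stable under the $\Qc^j$. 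The inclusion $\ker f\rt M$ commutes with the $\Qc^i$ by construction.

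\emph{Image.} For $n=f(m)$ we have $\Qc^i_N(n)=f(\Qc^i_M(m))\in\image f$, so $\image f$ is stable under the $\Qc^i_N$. We restrict the operators and argue exactly as for the kernel. Again the inclusion is a morphism of Cartan $S$-modules.

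\emph{Cokernel.} Since $\image f$ is a $\Qc^*_N$-stable $S$-submodule, each $\Qc^i_N$ descends to a well-defined $\F_q$-linear map $\ov{\Qc^i}$ on $N/\image f$, given by $\ov{\Qc^i}(\ov n)=\ov{\Qc^i_N(n)}$. The Cartan formula holds in $N$ and therefore holds after projection, since the projection is $S$-linear. Hence $\coker f$ is a Cartan $S$-module, and $N\rt\coker f$ commutes with the operators by definition.

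The only step that needs any care is the $\Qc^*$-stability of $\ker f$ and $\image f$, which is immediate from the commuting square in Definition \ref{def-cart-morph}. The rest of the argument is routine.
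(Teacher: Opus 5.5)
Your proposal is correct and matches the paper's approach. The paper gives no written proof; it only says the result follows easily from the definition of a morphism of Cartan $S$-modules. Your argument supplies exactly those details: $\ker f$ and $\image f$ are stable under the $\Qc^i$ by the commuting square, and the operators then restrict to these submodules and descend to $\coker f$.
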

As a consequence we obtain
\begin{corollary}
\label{complex-cart} Let $\G \colon M_1 \xrightarrow{f_1} M_2 \xrightarrow{f_2} M_3$ be a complex of Cartan $S$-modules and morphims $f_1, f_2$ of Cartan $S$-modules. Then $H^2(\G)$ is a Cartan $S$-module.
\end{corollary}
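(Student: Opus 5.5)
The statement to prove is Corollary~\ref{complex-cart}: for a complex $M_1 \xrightarrow{f_1} M_2 \xrightarrow{f_2} M_3$ of Cartan $S$-modules, with $f_1, f_2$ morphisms of Cartan $S$-modules, the homology $H^2(\G) = \ker f_2/\image f_1$ is a Cartan $S$-module. The plan is to apply Proposition~\ref{kic} twice, together with one small observation about subobjects.

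\textbf{Step 1 (kernel).} By Proposition~\ref{kic}, $K = \ker f_2$ is a Cartan $S$-module. Its operators are the restrictions of the $\Qc^i_{M_2}$, and the inclusion $K \rt M_2$ is a morphism of Cartan $S$-modules.

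\textbf{Step 2 (factoring $f_1$).} Since $f_2 f_1 = 0$, the map $f_1$ factors through an $S$-linear map $g \colon M_1 \rt K$. We must check that $g$ commutes with the operators. For $m \in M_1$,
$$\Qc^i_K(g(m)) = \Qc^i_{M_2}(f_1(m)) = f_1(\Qc^i_{M_1}(m)) = g(\Qc^i_{M_1}(m)).$$
The first equality holds because $\Qc^i_K$ is the restriction of $\Qc^i_{M_2}$, and the second because $f_1$ is a Cartan morphism. Hence $g$ is a morphism of Cartan $S$-modules.

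\textbf{Step 3 (cokernel).} By Proposition~\ref{kic} applied to $g$, the module $\coker g$ is a Cartan $S$-module. Since $\image g = \image f_1$, we have $\coker g = \ker f_2/\image f_1 = H^2(\G)$.

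There is no real obstacle. The only point needing care is Step 2: the Cartan structure on $\ker f_2$ must be the restricted one, which is exactly what the inclusion being a Cartan morphism in Proposition~\ref{kic} guarantees. With that, the compatibility check for $g$ is the one-line computation above.
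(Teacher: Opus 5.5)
Your proof is correct and takes essentially the same route as the paper, which also applies Proposition~\ref{kic} twice: once to give $\ker f_2$ and $\image f_1$ their restricted Cartan structures as submodules of $M_2$, and once more to pass to the quotient $\ker f_2/\image f_1$. Your factorization $g\colon M_1 \rt \ker f_2$, with its compatibility check, just makes explicit the Cartan morphism whose cokernel is being taken, which the paper leaves implicit.
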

\begin{proof}
  By \ref{kic} we have $\ker f_2$ and $\image f_1$ are Cartan submodules of $M_2$. So again by \ref{kic}, $H^2(\G) = \ker f_2/\image f_1$ is a Cartan $S$-module.
\end{proof}

The following Lemma is critical.
\begin{lemma}\label{ann-p*}
Let $M$ be a Cartan $S$-module. Then $\ann M$ is a $\Pc^*$-invariant ideal of $S$.
\end{lemma}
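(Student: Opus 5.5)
We argue by strong induction on $r \geq 0$: if $s \in \ann M$, then $\Pc^r(s) \in \ann M$ for every $r \geq 0$. This is precisely the statement that $\ann M$ is $\Pc^*$-invariant. The case $r = 0$ is immediate because $\Pc^0$ is the identity.

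Fix $s \in \ann M$ and $r \geq 1$, and assume $\Pc^i(s) \in \ann M$ for all $0 \leq i < r$. Take any $\alpha \in M$. Since $s\alpha = 0$ and $\Qc^r_M$ is $\F_q$-linear, the Cartan formula gives
$$0 = \Qc^r_M(s\alpha) = \sum_{i+j=r} \Pc^i(s)\,\Qc^j_M(\alpha) = \Pc^r(s)\,\Qc^0_M(\alpha) + \sum_{i=0}^{r-1} \Pc^i(s)\,\Qc^{r-i}_M(\alpha).$$
Each term in the last sum has the form $\Pc^i(s)\cdot m$ with $m = \Qc^{r-i}_M(\alpha) \in M$ and $i < r$. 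By the induction hypothesis $\Pc^i(s)$ annihilates all of $M$, so each such term is zero. Since $\Qc^0_M$ is the identity, we are left with $\Pc^r(s)\alpha = 0$. As $\alpha$ was arbitrary, $\Pc^r(s) \in \ann M$, which completes the induction.

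The only point that needs care is to keep the induction on $r$ uniform over all $\alpha \in M$. The lower-order terms $\Pc^i(s)$ are applied to elements $\Qc^{r-i}_M(\alpha)$, not to $\alpha$ itself. So the hypothesis must be that $\Pc^i(s)$ kills the whole module, not merely the particular element $\alpha$. With that formulation the argument is complete.
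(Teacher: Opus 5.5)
Your proof is correct and follows the paper's argument: both fix $s \in \ann M$, use induction on $r$ in which each lower $\Pc^i(s)$ kills all of $M$, and expand $0 = \Qc^r_M(s\alpha)$ by the Cartan formula so that only $\Pc^r(s)\alpha$ survives. The only cosmetic difference is that the paper treats $r = 1$ separately before the inductive step, while you handle all $r \geq 1$ in one uniform step.
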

\begin{proof}
Let $s \in \ann M$. We show $\Pc^i(s) \in \ann M$ for all $i \geq 0$. This is certainly true when $i = 0$ as $\Pc^0$ is the identity. Let $m \in M$.
Then we have
$$ 0 = \Qc^1(s m) = \Pc^1(s) m + s \Qc^1(m) = \Pc^1(s)m.$$
The second equality is as $\Qc^1(m) \in M$. Thus $\Pc^1(s) \in \ann M$.

Assume $\Pc^i(s) \in \ann M $ for $i = 0, \ldots, r-1$ where $r \geq 2$. Then we have
$$0 = \Qc^r(sm) = \sum_{i+j = r}\Pc^i(s)\Qc^j(m) = \Pc^r(s)m. $$
The second equality is because $\Pc^i(s) \in \ann M$ for $i< r$. By the above equality we get $\Pc^r(s) \in \ann M$.
The result follows.
\end{proof}
\section{The Cartan $S$-module structure on $S_x$}
In this section we prove two critical results. We first prove
\begin{theorem}
\label{loc}(with setup as in \ref{setup}). Let $x \in S$ be homogeneous. Then there exists a  unique Cartan S-module structure on $M = S_x$ such that $\Qc^i_M(s) = \Pc^i(s)$ for all $s \in S$ and for all $i \geq 0$.
\end{theorem}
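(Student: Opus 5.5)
We extend the total Steenrod operation to the localization. The map $P(\xi)\colon S \rt S[\xi]$ is a ring homomorphism (it restricts from $\F_q[V]$ to $S$, as noted above). We would like to localize it at $x$. The image $P(\xi)(x) = x + \Pc^1(x)\xi + \Pc^2(x)\xi^2 + \cdots$ is a polynomial in $\xi$ with constant term $x$. Hence it is a unit in the power series ring $S_x[[\xi]]$: write $P(\xi)(x) = x(1 + \eta)$ with $\eta \in \xi S_x[[\xi]]$, and invert using the geometric series. By the universal property of localization, the composite $S \rt S[\xi] \rt S_x[[\xi]]$ extends uniquely to a ring homomorphism $\wt{P}\colon S_x \rt S_x[[\xi]]$.

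Define $\Qc^i_M$ by
$$\wt P(\alpha) = \sum_{i\geq 0} \Qc^i_M(\alpha)\xi^i,$$
that is,
$$\wt P(s/x^n) = P(\xi)(s)\, P(\xi)(x)^{-n}.$$
We then check the required properties in turn.

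\begin{enumerate}[\rm (1)]
\item Each $\Qc^i_M$ is $\F_q$-linear, since $\wt P$ is additive and $\F_q$-linear. The latter holds because $\Pc^0$ is the identity and $\Pc^i(c)=0$ for $c \in \F_q$, $i>0$; indeed $P(\xi)$ is an $\F_q$-algebra map, as $P(\xi)(1)=1$ and it is additive.
\item $\Qc^0_M$ is the identity: reduce $\wt P$ modulo $\xi$. This gives a ring map $S_x \rt S_x$ extending the identity on $S$, so it is the identity by uniqueness in the universal property.
\item For $s\in S$ we have $\Qc^i_M(s)=\Pc^i(s)$, since $\wt P$ extends $P(\xi)$.
\item The Cartan formula is obtained by comparing coefficients of $\xi^r$ in $\wt P(s\alpha)=P(\xi)(s)\wt P(\alpha)$. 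This gives $\Qc^r_M(s\alpha)=\sum_{i+j=r}\Pc^i(s)\Qc^j_M(\alpha)$.
\end{enumerate}

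A small point to address is that the formal power series ring is genuinely needed, since $P(\xi)(x)^{-1}$ is not a polynomial in $\xi$. Each coefficient $\Qc^i_M(\alpha)$ is nonetheless a well-defined element of $S_x$, so this causes no trouble. Homogeneity of $x$ is not needed for existence; it only makes the maps graded with $\deg\xi = 1-q$, so that $\Qc^i$ raises degree by $i(q-1)$.

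The main point is uniqueness, and it does not follow from ring-theoretic uniqueness, because a Cartan structure is only required to satisfy the twisted Leibniz rule with $s\in S$, not to be multiplicative on $S_x$. We argue directly. Suppose $\{\Qc^i\}$ is any Cartan structure with $\Qc^i(s)=\Pc^i(s)$ for $s \in S$. Let $\alpha=s/x^n$. Then $x^n\alpha=s$, and applying the Cartan formula gives
$$\Pc^r(s)=\Qc^r(x^n\alpha)=\sum_{i+j=r}\Pc^i(x^n)\Qc^j(\alpha)=x^n\Qc^r(\alpha)+\sum_{j<r}\Pc^{r-j}(x^n)\Qc^j(\alpha).$$
Since $x^n$ is a unit in $S_x$, this determines $\Qc^r(\alpha)$ recursively from $\Qc^0(\alpha),\ldots,\Qc^{r-1}(\alpha)$, starting from $\Qc^0(\alpha)=\alpha$. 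Hence the structure is unique by induction on $r$.

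Equivalently, in generating-function form, $P(\xi)(x^n)\sum_j\Qc^j(\alpha)\xi^j=P(\xi)(s)$ in $S_x[[\xi]]$. This forces $\sum_j\Qc^j(\alpha)\xi^j=\wt P(\alpha)$, which also shows that the constructed structure is the unique one.
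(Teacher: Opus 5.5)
Your proof is correct. For existence it takes a genuinely different route from the paper; for uniqueness it is the same.

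\textbf{What the paper does.} It defines $\Qc^r(u)$ for $u = a/x^m$ directly by the recursion $\Qc^r(u) = x^{-m}\bigl(\Pc^r(a) - \sum_{i+j=r,\, j<r}\Pc^i(x^m)\Qc^j(u)\bigr)$, which is the same recursion you use for uniqueness. It then checks, by separate inductions, that this is independent of the representative of $u$, that it is additive and $\F_q$-linear, and that the Cartan formula holds.

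\textbf{What your route buys.} That recursion is just the coefficient-wise form of $P(\xi)(x^m)\,\widetilde{P}(u) = P(\xi)(a)$, so both constructions give the same operators. By realizing the $\Qc^i$ as coefficients of the ring homomorphism $\widetilde{P}\colon S_x \rt S_x[[\xi]]$ supplied by the universal property of localization, you get well-definedness, additivity and the Cartan formula for free. This replaces the paper's lengthy inductive verifications. Your version also gives more:
\begin{enumerate}[\rm (a)]
\item The $\Qc^i$ satisfy the Cartan formula for products of two arbitrary elements of $S_x$, since $\widetilde{P}$ is multiplicative.
\item Theorem \ref{maps} becomes immediate. The two composites $S_x \rt S_{xy}[[\xi]]$ (through $S_x[[\xi]]$, and through $S_{xy}$) are ring homomorphisms agreeing on $S$, hence they coincide by the uniqueness clause of the universal property. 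The paper proves this compatibility by another explicit computation.
\end{enumerate}
The only cost is passing to power series. As you observe, this is unavoidable, because $\widetilde{P}(1/x)$ typically has infinitely many non-zero coefficients.

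\textbf{One small repair.} Additivity together with $P(\xi)(1)=1$ only yields $\F_p$-linearity of $P(\xi)$. For $c \in \F_q$ and $0 \neq v \in V^*$, use
$P(\xi)(c)P(\xi)(v) = P(\xi)(cv) = cv + c^q v^q\xi = c\,P(\xi)(v)$, since $c^q = c$.
Cancelling $P(\xi)(v)$ in the domain $\F_q[V][\xi]$ gives $P(\xi)(c) = c$. Then $\widetilde{P}$ is $\F_q$-linear, being a ring map extending $P(\xi)$.
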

The next result is also critical.
\begin{theorem}
\label{maps}(with setup as in \ref{setup}). Let $x, y$ be homogeneous. Then the natural map $S_x \rt S_{xy}$ is morphism of Cartan $S$-modules (the Cartan module structures on $S_x, S_{xy}$ are as constructed in Theorem \ref{loc}).
\end{theorem}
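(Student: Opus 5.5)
The Cartan structures on $S_x$ and $S_{xy}$ come from Theorem \ref{loc}. The plan is to compare both with the natural extension of the total Steenrod operation $P(\xi)$ to localizations. Write $\Pc(\xi) = \sum_i \Pc^i \xi^i \colon S \rt S[\xi]$; this is a ring homomorphism. Since $P(\xi)(x) = x + (\text{terms with positive powers of } \xi)$, the element $P(\xi)(x)$ is a unit in $S_x[[\xi]]$. Hence $P(\xi)$ extends uniquely to a ring homomorphism $\Phi_x \colon S_x \rt S_x[[\xi]]$, given by $\Phi_x(s/x^n) = P(\xi)(s)\,P(\xi)(x)^{-n}$. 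Because of the degree convention $\deg \xi = 1-q < 0$, for each homogeneous element only finitely many coefficients in a fixed degree occur. I expect the structure of Theorem \ref{loc} to be exactly $\Qc^i_{S_x}(\alpha) = $ the coefficient of $\xi^i$ in $\Phi_x(\alpha)$.

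The first step is to identify the structure of Theorem \ref{loc} with $\Phi_x$. Define $\Qc^i$ as the coefficients of $\Phi_x$. Since $\Phi_x$ is multiplicative and restricts to $P(\xi)$ on $S$, we get $\Qc^r(s\alpha) = \sum_{i+j=r}\Pc^i(s)\Qc^j(\alpha)$ and $\Qc^i|_S = \Pc^i$. So this is a Cartan structure with the required normalization, and by the uniqueness in Theorem \ref{loc} it is the structure constructed there. If I only had the uniqueness statement without the explicit construction, I would instead show directly that any Cartan structure extending $\Pc^*$ is forced. From $x^n\alpha = s$, the Cartan identity gives $\sum_{i+j=r}\Pc^i(x^n)\Qc^j(\alpha) = \Pc^r(s)$. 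Solving recursively, using that $x^n$ is invertible in $S_x$, determines each $\Qc^r(\alpha)$.

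The second step is the compatibility itself. Let $\lambda \colon S_x \rt S_{xy}$ be the natural map and $\Phi_x, \Phi_{xy}$ the two extensions. Then $\lambda[[\xi]] \circ \Phi_x$ and $\Phi_{xy}\circ \lambda$ are both ring homomorphisms $S_x \rt S_{xy}[[\xi]]$ that agree with $P(\xi)$ on $S$. A ring homomorphism out of a localization $S_x$ is determined by its restriction to $S$, since $x$ maps to a unit. So the two composites coincide. Comparing coefficients of $\xi^i$ gives $\Qc^i_{S_{xy}}\circ\lambda = \lambda\circ \Qc^i_{S_x}$, and $\lambda$ is clearly $S$-linear, so $\lambda$ is a morphism of Cartan $S$-modules.

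An alternative that avoids power series is the recursion argument. For $\alpha = s/x^n$, the elements $\lambda(\Qc^r_{S_x}(\alpha))$ satisfy in $S_{xy}$ the same recursion $\sum_{i+j=r}\Pc^i(x^n)\,\beta_j = \Pc^r(s)$ with $\beta_0 = \lambda(\alpha)$. The elements $\Qc^r_{S_{xy}}(\lambda(\alpha))$ satisfy this recursion too, because $x^n\lambda(\alpha) = s$ in $S_{xy}$. Since $x^n$ is a unit in $S_{xy}$, induction on $r$ shows the two sequences agree.

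The main obstacle is well-definedness and convergence in the first step: checking that $P(\xi)(x)$ is invertible in a suitable completion and that the coefficients $\Qc^i$ land in $S_x$ rather than in some larger ring. This follows because the inverse $x^{-1}(1 + x^{-1}u\xi)^{-1}$, with $u \in S[\xi]$, expands with coefficients in $S_x$. Everything after that is formal. If instead I rely purely on the recursive uniqueness argument, no completion is needed and the proof reduces to the induction on $r$ described above.
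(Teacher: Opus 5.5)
Your proof is correct, and both of your routes differ from the paper's.

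\textbf{The paper's route.} The paper argues by direct induction on $r$. It writes $u=a/x^m=ay^m/(x^my^m)$, so that $\Qc^r_{S_{xy}}(u)$ is governed by the recursion with denominator $x^my^m$. It then expands $\Pc^r(ay^m)$ in two ways: through that recursion, and as $\Pc^r$ of the product $a\cdot y^m$. Telescoping the resulting double sums shows that $x^m\Qc^r_{S_{xy}}(u)+\sum_{i\ge 1}\Pc^i(x^m)\Qc^{r-i}_{S_x}(u)=\Pc^r(a)$.

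\textbf{Your recursion route.} This removes the computation. Theorem \ref{loc} already supplies, on $S_{xy}$, the Cartan formula and the normalization $\Qc^r_{S_{xy}}|_S=\Pc^r$. Applying $\Qc^r_{S_{xy}}$ to the identity $x^n\lambda(\alpha)=s$ therefore shows that $\Qc^r_{S_{xy}}(\lambda(\alpha))$ obeys the same recursion as $\lambda(\Qc^r_{S_x}(\alpha))$, with the same unit $x^n$. So $y$ never has to be expanded. This is just the uniqueness argument of Theorem \ref{loc} applied once more, and it is much shorter than the paper's expansion.

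\textbf{Your power-series route.} This is more conceptual still. You realize the structure of Theorem \ref{loc} as the coefficients of the unique ring extension $\Phi_x\colon S_x\rt S_x[[\xi]]$ of $P(\xi)$. The uniqueness clause does the identification, and $\Qc^0=\mathrm{id}$ holds because taking constant terms after $\Phi_x$ is a ring map restricting to the identity on $S$. Compatibility with $S_x\rt S_{xy}$ then becomes the universal property of localization.

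This route would also give all of Theorem \ref{loc} without the paper's Claims 1 and 2: independence of the representation, $\F_q$-linearity, and the Cartan formula. It further yields the closed formula that $\Qc^r(s/x^n)$ is the coefficient of $\xi^r$ in $P(\xi)(s)P(\xi)(x)^{-n}$.

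The only price is working in $S_x[[\xi]]$ rather than $S_x[\xi]$, since $P(\xi)(x)^{-1}$ is genuinely an infinite series. Invertibility there is immediate because the constant term $x$ is a unit in $S_x$. Your appeal to the degree of $\xi$ is therefore not needed.
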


We first give
\begin{proof}[Proof of Theorem \ref{loc}] We first define the Cartan $S$-module structure on $M = S_x$. Let $u \in M$. Write $u = a/x^m$ with $m \geq 0$ (minimal).
Define $\Qc^0(u) = u$. Assume $\Qc^i(u)$ is defined for $i < r$. Define
$$\Qc^r(u) = \frac{1}{x^m}\left( \Pc^r(a) - \sum_{i+j =r, j<r}\Pc^i(x^m)\Qc^j(u) \right). $$
We first show that we dont need a minimal representation of $u$ to define $\Qc^r(u)$. More precisely let $u = b/x^l$ where $l > m$. Define $\Qc^0_b(u) = u$. Assume $\Qc^i_b(u)$ is defined for $i < r$. Define
$$\Qc^r_b(u) = \frac{1}{x^l}\left( \Pc^r(b) - \sum_{i+j =r, j<r}\Pc^i(x^l)\Qc^j_b(u) \right). $$
Claim 1: $\Qc^r(u) =\Qc^r_b(u)$ for all $r \geq 0$.

We prove the claim by induction on $r \geq 0$. We have $\Qc^0(u) = \Qc^0_b(u) = u$.
Set $c = l -m$. We have $b = x^ca$. We first  prove the  case $r = 1$.
We have
$$\Pc^1(a) = x^m\Qc^1(u) + \Pc^1(x^m)u, \quad \text{and} \quad \Pc^1(b) = x^l\Qc_b^1(u) + \Pc^1(x^l)u. $$
We have $b = x^ca$.
So we have
$$\Pc^1(b) = x^c\Pc^1(a) + \Pc^1(x^c)a. $$
We have
\begin{align*}
  x^c\Pc^1(a) + \Pc^1(x^c)a &= \Pc^1(b) \\
  &= x^l\Qc_b^1(u) + \Pc^1(x^l)u  \\
   &= x^l\Qc_b^1(u) + \Pc^1(x^c)x^mu  +  x^c\Pc^1(x^m)u.
\end{align*}
As $x^mu = a$, we obtain $x^c\Pc^1(a) =  x^l\Qc_b^1(u)  +  x^c\Pc^1(m)u$. By comparing earlier expression of $\Pc^1(a)$ in terms of $u$ we obtain $x^l\Qc^1(u) = x^l\Qc^1_b(u)$. Cancelling $x^l$ we obtain $\Qc^1(u) = \Qc^1_b(u)$.

Now assume we have proved $\Qc^i(u) = \Qc^i_b(u)$ for $i< r$. We prove it for $i = r$.
We have
$$\Pc^r(a) = x^m \Qc^r(u) + \sum_{i+j =r, j<r}\Pc^i(x^m)\Qc^j(u).$$
Set
$$\xi = x^m \Qc^r_b(u) + \sum_{i+j =r, j<r}\Pc^i(x^m)\Qc^j(u).$$
Note that it suffices to prove $\xi = \Pc^r(a)$.
We have
\begin{align*}
  \Pc^r(b) &=  x^l \Qc^r_b(u) + \sum_{i+j =r, j<r}\Pc^i(x^l)\Qc^j_b(u), \\
   &=  x^l \Qc^r_b(u) + \sum_{i+j =r, j<r}\Pc^i(x^l)\Qc^j(u), \ \ \text{by induction hypothesis}.
   \end{align*}
So
\begin{align*}
  \Pc^r(b) &=   x^l\Qc^r_b(u) + \sum_{i= 1}^{r}\Pc^i(x^l)\Qc^{r-i}(u)\\
  &= x^l\Qc^r_b(u) + \sum_{i = 1}^{r} \left(\sum_{j = 0}^{i}\Pc^j(x^c)\Pc^{i-j}(x^m)\right)\Qc^{r-i}(u)\\
   &= x^l\Qc^r_b(u) + x^c\left(\sum_{i = 1}^{r}\Pc^i(x^m)\Qc^{r-i}(u) \right) \\
   &\quad + \sum_{i = 1}^{r} \left(\sum_{j = 1}^{i}\Pc^j(x^c)\Pc^{i-j}(x^m)\right)\Qc^{r-i}(u)\\
  &= x^c\xi + \Pc^1(x^c)\left( \sum_{i = 0}^{r-1}\Pc^i(x^m)\Qc^{r-1-i}(u) \right) \\
  &\quad + \sum_{i = 2}^{r} \left(\sum_{j = 2}^{i}\Pc^j(x^c)\Pc^{i-j}(x^m)\right)\Qc^{r-i}(u) \\
  \end{align*}
Thus
\begin{align*}
 \Pc^r(b)  &=  x^c\xi + \Pc^1(x^c)\Pc^{r-1}(a) + \Pc^2(x^c)\left( \sum_{i = 0}^{r-2}\Pc^i(x^m)
  \Qc^{r-2-i}(u) \right)\\
  &\quad +  \sum_{i = 3}^{r} \left(\sum_{j = 3}^{i}\Pc^j(x^c)\Pc^{i-j}(x^m)\right)\Qc^{r-i}(u) \\
  &=  x^c\xi + \Pc^1(x^c)\Pc^{r-1}(a) + \Pc^2(x^c)\Pc^{r-1}(a) \\
   &\quad +  \sum_{i = 3}^{r} \left(\sum_{j = 3}^{i}\Pc^j(x^c)\Pc^{i-j}(x^m)\right)\Qc^{r-i}(u) \\
   &= \cdots  \cdots \cdots \cdots \\
   &=  x^c\xi + \Pc^1(x^c)\Pc^{r-1}(a) + \Pc^2(x^c)\Pc^{r-1}(a)+ \cdots + \Pc^r(x^c)a.
\end{align*}
We also have
$b = x^ca$. So we have
$$ \Pc^r(b) = \Pc^r(x^ca) = x^c\Pc^r(a) + \Pc^1(x^c)\Pc^{r-1}(a) + \Pc^2(x^c)\Pc^{r-1}(a)+ \cdots + \Pc^r(x^c)a.$$
Comparing terms we get $x^c\Pc^r(a) = x^c\xi$. So $\xi = \Pc^r(a)$. As discussed earlier this implies $\Qc^r_b(u) = \Qc^r(u)$. Thus Claim 1 is proved.

Next we prove that the operators $\Qc^r(-) \colon S_x \rt S_x$ are $\F_q$-linear.

Let $u = a/x^m$ and $v = b/x^n$. We first show $\Qc^r(u + v)  = \Qc^r(u) + \Qc^r(v)$.
We may assume that $n = \max\{m, n\}$. Set $u = c/x^n$. We prove the assertion by induction on $r$. When $r = 0$ then $\Qc^0$ is the identity map. So we have nothing to prove.
Assume $r \geq 1$ and that we have proved $\Qc^i(u+v) = \Qc^i(u) + \Qc^i(v)$ for $0\leq i < r$. We have $b + c = x^n(u + v)$. We have
\begin{align*}
\Pc^r(b+c) &= x^n\Qc^r(u + v) + \sum_{i+j = r, j <r}\Pc^i(x^n)\Qc^j(u + v), \\
&= x^n\Qc^r(u + v) + \sum_{i+j = r, j <r}\Pc^i(x^n)\left(\Qc^j(u) + \Qc^j(v)\right).
\end{align*}
The second equality holds  by induction hypothesis.
We also have
\begin{align*}
\Pc^r(c) &= x^n\Qc^r(u) + \sum_{i+j = r, j <r}\Pc^i(x^n)\Qc^j(u ),  \text{and}\\
\Pc^r(b)&= x^n\Qc^r( v) + \sum_{i+j = r, j <r}\Pc^i(x^n) \Qc^j(v).
\end{align*}
As $\Pc^r(b + c) = \Pc^r(b) + \Pc^r(c)$ it follows that $x^n\Qc^r(u + v) = x^n\Qc^r(u) + x^n \Qc^r(v)$. Canceling $x^n$ we get that $\Qc^r(u+v) = \Qc^r(u) + \Qc^r(v)$. Thus by induction we get that $\Qc^r(-)$ are additive.

Let $u = a/x^m$ and let $\alpha \in \F_q$. We want to show that $\Qc^r(\alpha u) =\alpha \Qc^r(u)$ for all $r \geq 0$. When $r = 0$ then $\Qc^0$ is the identity map. So we have nothing to prove.
Assume $r \geq 1$ and that we have proved $\Qc^i(\alpha u) =  \alpha\Qc^i(u) $ for $0\leq i < r$.
Then by definition
\begin{align*}
\Qc^r(\alpha u) &= \frac{1}{x^m}\left( \Pc^r(\alpha a) - \sum_{i+j =r, j<r}\Pc^i(x^m)\Qc^j(\alpha u) \right),\\
  &=  \frac{1}{x^m}\left( \alpha\Pc^r( a) - \sum_{i+j =r, j<r}\Pc^i(x^m)\alpha \Qc^j( u) \right), \quad \text{by induction hypothesis},\\
  &= \alpha \frac{1}{x^m}\left( \Pc^r( a) - \sum_{i+j =r, j<r}\Pc^i(x^m)\Qc^j( u) \right),\\
  &= \alpha \Qc^r(u)
\end{align*}
Thus $\Qc^r(-)$ are $\F_q$-linear for all $r \geq 0$.

Claim-2: If $u \in S_x$ and $s \in S$ then
$$ \Qc^r(su) = \sum_{i+j = r}\Pc^i(s)\Qc^j(u) \quad \text{for all $r \geq 0$}. $$
We prove this result by induction on $r$. When $r = 0$ we have nothing to show as $\Qc^0$ and $\Pc^0$ are identity maps.
Let $u = a/x^m$. So $a = x^m u$ and $sa = x^m su$. We first show the result when $r = 1$. By definition we have
\[
\Pc^1(a) = x^m\Qc^1(u) + \Pc^1(x^m)u, \quad \text{and}
\]
\[
\Pc^1(sa) = x^m\Qc^1(su) + \Pc^1(x^m)su.
\]
We also have
\begin{align*}
  \Pc^1(sa) &= \Pc^1(s)a + s \Pc^1(a), \\
  &= \Pc^1(s) a + sx^m\Qc^1(u) + \Pc^1(x^m)su, \\
  &= \Pc^1(s)x^m u + sx^m\Qc^1(u) + \Pc^1(x^m)su.
\end{align*}
So we obtain $x^m\Qc^1(su) = x^m\Pc^1(s)u + x^ms\Qc^1(u)$. Cancelling $x^m$ we obtain Claim-2 for $r = 1$.

Let $r \geq 2$. Assume Claim-2 is satisfied for all $i<r$. We prove Claim 2 for $i = r$.
We have
$$\Pc^r(a) = x^m \Qc^r(u) + \sum_{i+j =r, j<r}\Pc^i(x^m)\Qc^j(u), \quad \text{and}$$
$$\Pc^r(sa) = x^m \Qc^r(su) + \sum_{i+j =r, j<r}\Pc^i(x^m)\Qc^j(su).$$
We also have
\begin{align*}
\Pc^r(sa)   &= \Pc^r(s)a + \sum_{i =1}^{r}\Pc^{r-i}(s) \Pc^i(a), \\
   &=  \Pc^r(s)a + \sum_{i =1}^{r} \Pc^{r-i}(s)\left( \sum_{j=0}^{i}\Pc^{j}(x^m)\Qc^{i-j}(u)\right),\\
  &= \Pc^r(s)a + x^m\sum_{i=1}^{r}\Pc^{r-i}(s)\Qc^i(u)  \\
  &\quad + \sum_{i =1}^{r} \Pc^{r-i}(s)\left( \sum_{j=1}^{i}\Pc^{j}(x^m)\Qc^{i-j}(u)\right) \\
  &= x^m\left(\sum_{i+j = r}\Pc^i(s)\Qc^j(u) \right) + \Pc^1(x^m)\left(\sum_{i+j = r-1}\Pc^i(s)\Qc^j(u) \right) \\
  &\quad +  \sum_{i =2}^{r} \Pc^{r-i}(s)\left( \sum_{j=2}^{i}\Pc^{j}(x^m)\Qc^{i-j}(u)\right) \\
  &= x^m\left(\sum_{i+j = r}\Pc^i(s)\Qc^j(u) \right) + \Pc^1(x^m)\Qc^{r-1}(su)\\ &\quad + \Pc^2(x^m)\left(\sum_{i+j = r-2}\Pc^i(s)\Qc^j(u) \right) \\
  &\quad  \quad \quad + \sum_{i = 3}^{r} \Pc^{r-i}(s)\left( \sum_{j = 3}^{i}\Pc^{j}(x^m)\Qc^{i-j}(u)\right), \\
  &= \cdots \cdots \cdots \\
  &= x^m\left(\sum_{i+j = r}\Pc^i(s)\Qc^j(u) \right) + \sum_{i+j = r, j<r}\Pc^i(x^m)\Qc^j(su).
\end{align*}
So we obtain
\[
x^m\Qc^r(su) = x^m\left(\sum_{i+j = r}\Pc^i(s)\Qc^j(u)\right).
\]
Claim-2 for $i = r$ follows by cancelling $x^m$.

It is clear that if $u = s \in S$ then we may consider $u = s/x^0$.
So we have $$\Pc^r(s) = \sum_{i+j = r}\Pc^i(1) \Qc^j(u) = 1. \Qc^r(u) = \Qc^r(u).$$

We now prove uniqueness of our Cartan module structure on $S_x$.  Suppose $S_x$ has another set of $\F_q$-linear operators $\Hc^r(-)$ giving it a Cartan module structure such that $\Hc^r(s) = \Pc^r(s)$ for all $s \in S$.
We prove that $\Hc^r(u) = \Qc^r(u)$ for all $r \geq 0$ and for all $u \in S_x$. This is trivial for $r = 0$ as $\Hc^0(-)$ and $\Qc^0(-)$ are identity. Suppose $\Hc^i(-) = \Qc^i(-)$ for $i < r$. Let $u = a/x^m$ with $a \in S$.
Then $a = x^mu$. So we have
\begin{align*}
  \Pc^r(a) &= \Hc^r(a) = \sum_{i+j = r}\Pc^i(x^m)\Hc^j(u) =  x^m\Hc^r(u) + \sum_{i+j = r, j<r}\Pc^i(x^m)\Hc^j(u)  \\
  &=  x^m\Hc^r(u) + \sum_{i+j = r, j<r}\Pc^i(x^m)\Qc^j(u), \text{by induction hypothesis}
\end{align*}
So we obtain
$$\Hc^r(u) = \frac{1}{x^m}\left( \Pc^r(a) - \sum_{i+j =r, j<r}\Pc^i(x^m)\Qc^j(u) \right) = \Qc^r(u). $$
The result follows by induction.
\end{proof}
Next we give
\begin{proof}[Proof of Theorem \ref{maps}]
Set $M = S_x$ and let $\Qc^r_M \colon M \rt M$ for $r \geq 0$ be operators as constructed in Theorem \ref{loc}. Set $N = S_{xy}$ and let $\Qc^r_N \colon N \rt N$ for $r \geq 0$ be operators as constructed in Theorem \ref{loc}. Let $u \in M$. We want to show $\Qc^r_M(u) = \Qc^r_N(u)$ for all $r \geq 0$. We prove this by induction on $r$. As $\Qc^0_M$ and $\Qc^0_N$ are identity maps the result holds for $r = 0$.

Let $u = a/x^m = ay^m/x^my^m$. We first prove the result for $r = 1$.
We have $\Pc^1(a) = \Pc^1(x^m)u + x^m\Qc^1_M(u)$. We have
\[
  \Pc^1(ay^m) = \Pc^1(x^my^m)u + x^my^m\Qc^1_N(u)
\]
\[
\quad \quad \quad \quad \quad =  \Pc^1(x^m)y^mu+ x^m\Pc^1(y^m)u + x^my^m\Qc^1_N(u).
\]
We also have
\begin{align*}
  \Pc^1(ay^m) &= \Pc^1(a)y^m + a\Pc^1(y^m) \\
  &= (\Pc^1(x^m)u + x^m\Qc^1_M(u))y^m + a\Pc^1(y^m).
\end{align*}
Comparing the two expressions of $\Pc^1(ay^m)$ we obtain $x^my^m \Qc^1_N(u) = x^my^m\Qc^1_M(u)$. Thus the result holds for $r = 1$.

Let $r \geq 2$. Assume $\Qc^i_M(u) = \Qc^i_N(u)$ for $i < r$.
We have
$$\Pc^r(a) = x^m\Qc^r_M(u) + \sum_{i = 1}^{r}\Pc^i(x^m)\Qc^{r-i}_M(u).$$
Set
$$\xi = x^m\Qc^r_N(u) + \sum_{i = 1}^{r}\Pc^i(x^m)\Qc^{r-i}_M(u).$$
Note that to prove $\Qc^r_M(u) = \Qc^r_N(u)$ it suffices to show $\Pc^r(a) = \xi$.
We have
\begin{align*}
  \Pc^r(ay^m) &= x^my^m \Qc^r_N(u) + \sum_{i = 1}^{r}\Pc^i(x^my^m)\Qc^{r-i}_N(u) \\
   &=  x^my^m \Qc^r_N(u) + \sum_{i = 1}^{r}\Pc^i(x^my^m)\Qc^{r-i}_M(u) \ \text{by induction hypotheses}.
   \end{align*}
So we have
\begin{align*}
  \Pc^r(ay^m)
  &=  x^my^m \Qc^r_N(u) +  \sum_{i = 1}^{r}\left(\sum_{j = 0}^{i}\Pc^j(y^m)\Pc^{i-j}(x^m) \right)\Qc^{r-i}_M(u),\\
  &= x^my^m \Qc^r_N(u) + y^m\sum_{i = 1}^{r}\Pc^i(x^m)\Qc^{r-i}_M(u) + \\
  &\quad  \quad \quad + \sum_{i = 1}^{r}\left(\sum_{j = 1}^{i}\Pc^j(y^m)\Pc^{i-j}(x^m) \right)\Qc^{r-i}_M(u) \\
  &=y^m\xi + \sum_{i = 1}^{r}\left(\sum_{j = 1}^{i}\Pc^j(y^m)\Pc^{i-j}(x^m) \right)\Qc^{r-i}_M(u)
\end{align*}
So we get
\begin{align*}
  \Pc^r(ay^m)
  &= y^m\xi + \Pc^1(y^m)\left( \sum_{i = 1}^{r}\Pc^{i-1}(x^m)\Qc^{r-i}(u)\right) + \\
   &\quad \quad \quad + \sum_{i = 2}^{r}\left(\sum_{j = 2}^{i}\Pc^j(y^m)\Pc^{i-j}(x^m) \right)\Qc^{r-i}_M(u) \\
   &= y^m\xi + \Pc^1(y^m)\Pc^{r-1}(a) + \sum_{i = 2}^{r}\left(\sum_{j = 2}^{i}\Pc^j(y^m)\Pc^{i-j}(x^m) \right)\Qc^{r-i}_M(u) \\
   &= y^m\xi + \Pc^1(y^m)\Pc^{r-1}(a) + \Pc^2(y^m)\left( \sum_{i = 2}^{r}\Pc^{i-2}(x^m)\Qc^{r-i}(u)\right) + \\
   &\quad \quad \quad + \sum_{i = 3}^{r}\left(\sum_{j = 3}^{i}\Pc^j(y^m)\Pc^{i-j}(x^m) \right)\Qc^{r-i}_M(u) \\
   &= y^m\xi + \Pc^1(y^m)\Pc^{r-1}(a) + \Pc^2(y^m)\Pc^{r-2}(a) + \\
    &\quad \quad \quad + \sum_{i = 3}^{r}\left(\sum_{j = 3}^{i}\Pc^j(y^m)\Pc^{i-j}(x^m) \right)\Qc^{r-i}_M(u) \\
    &= \cdots \cdots \cdots \\
    &= y^m\xi + \Pc^1(y^m)\Pc^{r-1}(a) + \Pc^2(y^m)\Pc^{r-2}(a) + \cdots + \Pc^r(y^m)a.
\end{align*}
We also have
$$\Pc^r(ay^m) = y^m\Pc^r(a) + \Pc^1(y^m)\Pc^{r-1}(a) + \Pc^2(y^m)\Pc^{r-2}(a) + \cdots + \Pc^r(y^m)a.$$
Comparing terms we get $\xi = \Pc^r(a)$. As noted earlier this implies that $\Qc^r_M(u) = \Qc^r_N(u)$. The result follows by induction.
\end{proof}

\section{Cartan module structure on local cohomology}
In this section we prove the following result:
\begin{theorem}\label{chief}
(with setup as in \ref{setup}) Let $I$ be a homogeneous ideal in $S$. Let $I = (x_1, \ldots, x_m)$ with $x_i$ homogeneous. Fix $i \geq 0$. Then there exists a Cartan $S$-module structure on $H^i_I(S)$ (depending on $\xb$).
\end{theorem}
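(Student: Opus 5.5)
We compute $H^i_I(S)$ as the $i$-th cohomology of the Čech complex
$$C^\bullet(\xb; S)\colon 0 \rt S \rt \bigoplus_{j} S_{x_j} \rt \bigoplus_{j<k} S_{x_jx_k} \rt \cdots \rt S_{x_1\cdots x_m} \rt 0.$$
This complex depends on the chosen generators $\xb$, which is why the structure in the statement depends on $\xb$. The plan has three steps. First, make each term of $C^\bullet(\xb;S)$ a Cartan $S$-module. Second, check that the differentials are morphisms of Cartan $S$-modules. Third, apply Corollary \ref{complex-cart} at spot $i$.

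\textbf{Step 1: the terms.} For any subset $\{j_1<\cdots<j_t\}$, put $x_J = x_{j_1}\cdots x_{j_t}$; this is homogeneous. Theorem \ref{loc} gives the unique Cartan structure on $S_{x_J}$ extending $\Pc^*$ on $S$. For $S$ itself, take $x=1$, or simply use $\Qc^i=\Pc^i$, which satisfies the Cartan formula directly.

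For a finite direct sum $\bigoplus_J M_J$ of Cartan modules, define $\Qc^r$ componentwise. The Cartan formula $\Qc^r(s\alpha)=\sum_{i+j=r}\Pc^i(s)\Qc^j(\alpha)$ holds componentwise, since both sides are additive in $\alpha$ and $S$ acts diagonally. Thus each $C^t$ is a Cartan $S$-module.

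\textbf{Step 2: the differentials.} The Čech differential $C^t\rt C^{t+1}$ is a signed sum of natural localization maps $S_{x_J}\rt S_{x_{J'}}$, where $J'=J\cup\{k\}$, so that $x_{J'} = x_J x_k$. By Theorem \ref{maps} with $x=x_J$ and $y=x_k$, each such map commutes with all $\Qc^r$. The map $S\rt S_{x_j}$ is covered by the same theorem with $x=1$, or directly by the property $\Qc^r(s)=\Pc^r(s)$ in Theorem \ref{loc}.

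Signs $\pm1\in\F_q$ commute with the $\F_q$-linear operators $\Qc^r$. Sums of maps commuting with $\Qc^r$ again commute with $\Qc^r$, because the $\Qc^r$ are additive. So each component of the differential, and hence the whole differential on the direct sums, is a morphism of Cartan $S$-modules in the sense of Definition \ref{def-cart-morph}.

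\textbf{Step 3: cohomology.} Apply Corollary \ref{complex-cart} to the three-term piece $C^{i-1}\rt C^i\rt C^{i+1}$, reading $C^{-1}=0$; the zero module is trivially Cartan and zero maps are Cartan morphisms. This makes $H^i(C^\bullet(\xb;S)) \cong H^i_I(S)$ a Cartan $S$-module. The $S$-module isomorphism transports the operators $\Qc^r$, and the Cartan identity involves only the $S$-module structure, so it is preserved.

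\textbf{Main obstacle.} The real work is already done in Theorems \ref{loc} and \ref{maps}. The step needing the most care here is Step 2: one must confirm that Theorem \ref{maps} applies to every component. Each localization map in the Čech complex has the form $S_x\rt S_{xy}$ with $x,y$ homogeneous. The one subtle case is when the index sets differ by a reordering, for example $x_J x_k$ versus $x_k x_J$. There one notes that $S_{x_{J'}}$ depends only on the element $x_{J'}$, so the Cartan structure of Theorem \ref{loc}, which is unique, is the same however the product is written.
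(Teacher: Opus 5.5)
Your proposal is correct and matches the paper's proof: the terms of the \v{C}ech complex on $\xb$ are made Cartan via Theorem \ref{loc} plus componentwise direct sums (the paper's Proposition \ref{add}), and the differentials are Cartan morphisms via Theorem \ref{maps} and Proposition \ref{map-add}. Corollary \ref{complex-cart} then puts a Cartan structure on $H^i_I(S)$. Your extra care about the $S\rt S_{x_j}$ case (take $x=1$), the signs, and the boundary terms $C^{-1}=0$ fills in details the paper leaves implicit.
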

We will need a few preliminaries.
\begin{proposition}\label{add}
Let $M, N$ be Cartan $S$-modules. Then $M \oplus N$  also has a natural Cartan $S$-module structure such that
\begin{enumerate}[\rm (1)]
  \item the natural maps $M \rt M\oplus N$, $N \rt M\oplus N$, $M\oplus N \rt M$ and $M\oplus N \rt N$ are morphism of Cartan $S$-modules.
  \item if $U$ is a Cartan $S$-module and $f\colon M \rt U$ and $g \colon N\rt U$ are morphism of Cartan $S$-modules then $f\oplus g \colon M\oplus N \rt U$ is a morphism of Cartan $S$-modules.
  \item if $U$ is a Cartan $S$-module  and $\alpha \colon U \rt M$ and $\beta \colon U \rt N$ are morphism of Cartan $S$-modules then $(\alpha, \beta) \colon  U \rt M\oplus N$ is a morphism Cartan $S$-modules.
\end{enumerate}
\end{proposition}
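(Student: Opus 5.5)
The plan is to define the structure componentwise: for $(m,n) \in M\oplus N$ set
$$\Qc^r_{M\oplus N}(m,n) = (\Qc^r_M(m), \Qc^r_N(n)) \quad \text{for all } r \geq 0.$$
Each $\Qc^r_{M\oplus N}$ is $\F_q$-linear because $\Qc^r_M$ and $\Qc^r_N$ are, and $\Qc^0_{M\oplus N}$ is the identity. For the Cartan identity, let $s \in S$. Since the $S$-module structure on $M\oplus N$ is componentwise, we have $s(m,n) = (sm, sn)$. Applying the Cartan identities in $M$ and $N$ coordinatewise gives
$$\Qc^r(s(m,n)) = \Bigl(\sum_{i+j=r}\Pc^i(s)\Qc^j_M(m), \sum_{i+j=r}\Pc^i(s)\Qc^j_N(n)\Bigr) = \sum_{i+j=r}\Pc^i(s)\Qc^j(m,n).$$
So $M\oplus N$ is a Cartan $S$-module.

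For (1), the inclusions $m \mapsto (m,0)$ and $n \mapsto (0,n)$ commute with the operators. This uses $\Qc^r_N(0) = 0$ and $\Qc^r_M(0)=0$, which hold by linearity. The projections commute with the operators by the very definition of $\Qc^r_{M\oplus N}$. All four maps are $S$-linear, so each is a morphism in the sense of Definition \ref{def-cart-morph}.

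For (2), the map $f\oplus g$ sends $(m,n)$ to $f(m)+g(n)$. We compute
$$\Qc^r_U(f(m)+g(n)) = \Qc^r_U f(m) + \Qc^r_U g(n) = f(\Qc^r_M m) + g(\Qc^r_N n) = (f\oplus g)(\Qc^r_{M\oplus N}(m,n)).$$
The first equality is additivity of $\Qc^r_U$, and the second holds because $f$ and $g$ are morphisms.

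For (3), the map $(\alpha,\beta)$ sends $u$ to $(\alpha u, \beta u)$. Then $\Qc^r(\alpha u,\beta u) = (\Qc^r_M\alpha u, \Qc^r_N\beta u) = (\alpha\Qc^r_U u, \beta \Qc^r_U u)$, so $(\alpha,\beta)$ commutes with the operators.

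No step is a real obstacle. The only point needing care is that the additivity of the $\Qc^r$ (their $\F_q$-linearity) is used in (1) and (2). This extends by induction to finite direct sums, which is what will be needed for the \v{C}ech complex.
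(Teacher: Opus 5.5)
Your proposal is correct and is the paper's proof: the paper also defines $\Qc^r_{M\oplus N}(m,n) = (\Qc^r_M(m), \Qc^r_N(n))$ and says the Cartan identity and (1)--(3) are readily or trivially verified. You write out those checks, and rightly note where the $\F_q$-linearity of the operators is needed: for $\Qc^r(0)=0$ in the inclusions and for the additivity of $\Qc^r_U$ in (2).
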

\begin{proof}
Define $\Qc^r_{M\oplus N} \colon M\oplus N \rt M\oplus N$ to be
$$\Qc^r_{M\oplus N}(m, n) = (\Qc^r(m), \Qc^r(n)).$$
It is readily verified that this gives a Cartan $S$-module structure on $M\oplus N$.

The assertions (1), (2) and (3) holds trivially.
\end{proof}
We also need the following easily proved result.
\begin{proposition}\label{map-add}
Let $M_i, N_j$ be Cartan $S$-modules for $1 \leq i \leq m$ and $1\leq j \leq n$. Set $M = \bigoplus_{i = 1}^{m}M_i$ and $N = \bigoplus_{j = 1}^{n}N_j$. Give $M, N$ the induced Cartan $S$-module structure. Let $\psi = (\psi_{i,j})\colon M \rt N$ where $\psi_{i,j} \colon M_i \rt N_j$. If each $\psi_{i,j}$ is morphism of Cartan $S$-modules then $\psi$ is a morphism of Cartan $S$-modules. \qed
\end{proposition}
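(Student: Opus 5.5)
The plan is a direct verification from the definitions, using the componentwise Cartan structure of Proposition \ref{add}. Iterating Proposition \ref{add} (by induction on the number of summands), $M=\bigoplus_{i=1}^m M_i$ carries the Cartan structure
$$\Qc^r_M(m_1,\ldots,m_m)=(\Qc^r_{M_1}(m_1),\ldots,\Qc^r_{M_m}(m_m)),$$
and likewise for $N$. This is the ``induced'' structure in the statement. The map $\psi$ sends $(m_1,\ldots,m_m)$ to the element of $N$ whose $j$-th coordinate is $\sum_{i=1}^m \psi_{i,j}(m_i)$. It is $S$-linear because each $\psi_{i,j}$ is.

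The key step is to check that $\Qc^r_N\circ\psi=\psi\circ\Qc^r_M$ for every $r\geq 0$. Since $\Qc^r_N$ acts coordinatewise, it suffices to compare $j$-th coordinates. For the left side, the $j$-th coordinate is $\Qc^r_{N_j}\bigl(\sum_i\psi_{i,j}(m_i)\bigr)$. Because $\Qc^r_{N_j}$ is $\F_q$-linear, in particular additive, this equals $\sum_i \Qc^r_{N_j}(\psi_{i,j}(m_i))$. Since each $\psi_{i,j}$ is a Cartan morphism, this in turn equals $\sum_i \psi_{i,j}(\Qc^r_{M_i}(m_i))$. That is exactly the $j$-th coordinate of $\psi(\Qc^r_M(m_1,\ldots,m_m))$.

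An equivalent structural route writes $\psi=\sum_{i,j}\iota_j\circ\psi_{i,j}\circ\pi_i$, where $\pi_i\colon M\rt M_i$ and $\iota_j\colon N_j\rt N$ are the projections and inclusions. By Proposition \ref{add}(1) these are Cartan morphisms, and compositions of Cartan morphisms are Cartan morphisms. The remaining fact is that a finite sum of Cartan morphisms $f,g\colon M\rt N$ is again one, which holds because $\Qc^r_N$ is additive:
$$\Qc^r_N(f+g)=\Qc^r_N f+\Qc^r_N g=f\Qc^r_M+g\Qc^r_M.$$

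There is no real obstacle. The only point needing care is that additivity of the $\Qc^r_{N_j}$ is part of the definition of a Cartan module, since the operators are required to be $\F_q$-linear. One must also confirm that the Cartan structure on the finite direct sum is the coordinatewise one. This follows from the Leibniz identity holding in each coordinate: for $s\in S$,
$$\Qc^r_M(s\cdot(m_i)_i)=\Bigl(\sum_{a+b=r}\Pc^a(s)\Qc^b_{M_i}(m_i)\Bigr)_i=\sum_{a+b=r}\Pc^a(s)\,\Qc^b_M((m_i)_i).$$
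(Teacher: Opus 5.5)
Your proposal is correct. It is the routine coordinatewise verification the paper leaves to the reader: the statement is marked \qed without proof. Your structural variant $\psi=\sum_{i,j}\iota_j\circ\psi_{i,j}\circ\pi_i$ amounts to iterating parts (1)--(3) of Proposition \ref{add}. In both versions the only real ingredient is the additivity of the operators $\Qc^r_{N_j}$, which you correctly identify.
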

We now give
\begin{proof}[Proof of Theorem \ref{chief}]
Let $\mathcal{C} = 0 \rt C^0 \rt C^1 \rt \cdots \rt C^m \rt 0$ be the modified \v{C}ech complex on $\xb$. Here
$$ C^t = \bigoplus_{1 \leq i_1< i_2 < \cdots<i_t \leq m}S_{x_{i_1}x_{i_2}\cdots x_{i_t}},$$
where the differential $\partial^t \colon C^t \rt C^{t+1}$ is given by \\  $(-1)^{s-1}.\text{nat}\colon S_{x_{i_1}x_{i_2}\cdots x_{i_t}} \rt (S_{x_{i_1}x_{i_2}\cdots x_{i_t}})_{x_{j_s}}$ if
$\{ i_1, \ldots, i_t \} = \{ j_1, \ldots, \wh{j_s}, \ldots, j_{t+1} \}$ and $0$ otherwise. It follows from \ref{loc} and \ref{add} that $C^t$ is a Cartan $S$-module for all $t$. By \ref{maps} and \ref{map-add} it follows that $\partial^t$ is a morphism of Cartan modules. By \ref{complex-cart} it follows that $H^i_I(S)$ has a structure of a Cartan $S$-module.
\end{proof}

\begin{remark}
Suppose $\xb$ and $\yb$ be two different set of homogeneous generators of $I$. Then  we can give a Cartan $S$-module structure on $H^i_I(S)$ which depends on $\xb$ and another Cartan $S$-module structure which depends on $\yb$. We \emph{do not know} whether these structures are the same. For our applications it suffices to know there is atleast one Cartan $S$-module structure on local cohomology modules.
\end{remark}

\section{Proof of Theorem \ref{main}}
In this section we give:
\begin{proof}[Proof of Theorem \ref{main}]
By \ref{chief} we get that $H^i_I(S)$ is a Cartan $S$-module. So by \ref{ann-p*} we get that $\ann H^i_I(S)$ is a $\Pc^*$-invariant ideal of $S$. So $J_i = \sqrt{\ann H^i_I(S)}$ is also $\Pc^*$-invariant ideal of $S$, see \cite[11.2.5]{SL}.  The result now follows from \ref{bella}.
\end{proof}
\section{Proof of Landweber-Stong conjecture }
In this section we give a proof of Landweber-Stong conjecture.
\begin{proof}[Proof of Theorem \ref{ls}]
The assertion (ii) $\implies $ (i) is trivial. We prove the converse.
We note that $\db_{d,0},\ldots, \db_{d, d-1}$ is a system of parameters of $S$. So if $S$ is \CM \ then
we have nothing to prove.

Assume $\depth S = d-g$ where $g > 0$. Also as $S$ is normal we have $d - g \geq 2$. Let $D = D^*(d)$ be the Dickson algebra. We consider $S$ as a $D$-module. Set $J =  (\db_{d,d-1}, \cdots, \db_{d,g})$. We note that it suffice to prove $\depth S_P \geq d -g$ for all primes $P$ in $D$ which contain $J$, see \cite[1.2.10]{BH}. The assertion holds if $P = D_+$ by our assumption. Now let $P \supseteq J$ and $P \neq D_+$. Let $D(u)$ be the $*$-canonical module of $D$.
We note that $\dim H^{i}_{S_+}(S)^\vee = \Ext^{d-i}_D(S, D(u))$ has dimension $\leq i$
 (see \ref{dual-dim}).  Set $K_{d-i} = \ann_D \Ext^{d-i}_D(S, D(u))$. So
 $\db_{d,0}, \ldots, \db_{d, d-i-1} \in \sqrt{K_{d-i}}$ by Theorem \ref{main} for $i < d$.

  Let $P \supseteq J$ be minimal. Then $P = J$. As $\db_{d, 0} \notin P$ we get that
  $\Ext^{i}_D(S, D(u))_P = 0$ for $i > 0$. So $S_P$ is \CM \ and so has $\depth  = d -g$.

  Let $P \supseteq J$ be such that $\height P = d - g + t$ with $t \geq 1$ and that $P \neq D_+$. We assert that $\depth S_P \geq d- g$. If not then $\Ext^{d-g + t -(d -g -i)}_D(S, D(u))_P \neq 0$ for some $i > 0$. So we have $\Ext^{t+i}(S, D(u))_P \neq 0$. We have $U = \Ext^{t+i}(S, D(u) = H^{d - t - i}_{S_+}(S)^\vee$. So $\db_{d,0}, \cdots, \db_{d, t+i -1} \in \sqrt{\ann_D U}$ by Theorem \ref{main}. As $U_P \neq 0$ we get that $\db_{d,0}, \cdots, \db_{d, t+i -1} \in P$. If $t+i -1 \geq g$ then note that $D_+ \subseteq P$ which is a contradiction. So  $t + i -1 < g$.
  Thus  $P$ contains $d - g + t + i$ variables where $i \geq 1$. This is a contradiction as $\height P = d - g + t$. The result follows.
\end{proof}
\section{Proof of Applications II}
In this section we give all the proofs of results stated in our Application II.
We first prove
\begin{proposition}
\label{rachel}(with hypotheses as in \ref{setup}). Let $P$ be a prime ideal in $S$ with $\height P \geq 2$. Then $H^0_P(S) = H^1_P(S) = 0$.  If $\height P \geq 3$ then $H^2_P(S)$ is a finitely generated $S$-module.
\end{proposition}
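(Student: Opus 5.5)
We prove Proposition \ref{rachel} using depth considerations for the first part and local duality plus normality for the second.

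\emph{Vanishing of $H^0_P(S)$ and $H^1_P(S)$.} We have $H^j_P(S) = 0$ for $j < \grade(P, S)$. Since $S$ is a normal domain it satisfies Serre's $S_2$ condition. Thus $\depth S_Q \geq \min\{\height Q, 2\}$ for every prime $Q$, and so $\depth S_Q \geq 2$ for every prime $Q \supseteq P$. Because $\grade(P,S) = \min\{\depth S_Q : Q \supseteq P\}$, we get $\grade(P,S) \geq 2$. Hence $H^0_P(S) = H^1_P(S) = 0$ whenever $\height P \geq 2$.

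\emph{Finite generation of $H^2_P(S)$.} Here we use Grothendieck's finiteness theorem (\cite[9.5.2]{BS}). To stay within the paper's setup, we may reduce to the graded case or pass to the completion; $S$ is a homomorphic image of a regular ring (a polynomial ring over $\F_q$), so the theorem applies. It states that for a finitely generated $S$-module $M$,
\[
f_P(M) = \min\{ \depth M_Q + \height((P + Q)/Q) : Q \in \Spec S \setminus V(P)\}.
\]
Here $f_P(M)$ is the least $j$ with $H^j_P(M)$ not finitely generated. It therefore suffices to show that for every prime $Q \not\supseteq P$,
\[
\depth S_Q + \height((P+Q)/Q) \geq 3.
\]
We split into cases according to $\height Q$.
\begin{enumerate}[\rm (a)]
  \item If $\height Q \geq 2$, then $\depth S_Q \geq 2$ by $S_2$. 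Since $Q \not\supseteq P$, we have $\height((P+Q)/Q) \geq 1$, so the sum is at least $3$.
  \item If $\height Q = 1$, then $S_Q$ is a DVR because $S$ is normal, so $\depth S_Q = 1$. Since $S$ is a graded, hence catenary, affine domain, $\height((P+Q)/Q) \geq \height P - \height Q \geq 3 - 1 = 2$. The sum is again at least $3$.
  \item If $Q = 0$, then $\depth S_Q = 0$ and $\height((P+Q)/Q) = \height P \geq 3$.
\end{enumerate}
Hence $f_P(S) \geq 3$, so $H^2_P(S)$ is finitely generated.

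\emph{The main obstacle} is justifying the height inequality $\height(P+Q)/Q \geq \height P - \height Q$ in case (b). Take a minimal prime $Q'$ of $P+Q$. In the catenary equidimensional domain $S$ we have $\height Q' = \height Q + \height(Q'/Q)$. Since $Q' \supseteq P$, this gives $\height(Q'/Q) \geq \height P - 1$. This step needs $S$ to be an affine domain, which holds because $S$ is a finitely generated $\F_q$-algebra and a domain.

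The other point to check is that Grothendieck's finiteness theorem applies in this generality. It does, because $S$ is a quotient of a regular ring and therefore has a dualizing complex. If one prefers to avoid the general statement, one can localize at each maximal ideal: $S_{\mathfrak m}$ is a homomorphic image of a regular local ring, and finite generation of $H^2_P(S)$ is a local property, so the local form of the theorem suffices.
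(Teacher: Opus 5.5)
Your proof is correct, but it takes a different route from the paper. The paper uses the Ellingsrud--Skjelbred spectral sequences with $\A = P$ and $M = R$.
\begin{enumerate}[\rm (1)]
\item Since $R$ is a polynomial ring and $PR$ has height $\geq 2$ (resp.\ $\geq 3$), sequence $(\beta)$ gives $H^n_P(G,R) = 0$ for $n \leq 1$ (resp.\ $n \leq 2$).
\item In $(\alpha)$, the terms $E_2^{0,0} = H^0_P(S)$ and $E_2^{1,0} = H^1_P(S)$ survive unchanged to $E_\infty$, so they vanish.
\item Also $E_3^{2,0} = E_\infty^{2,0} = 0$, so $d_2 \colon E_2^{0,1} \rt E_2^{2,0}$ is surjective. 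Hence $H^2_P(S)$ is a quotient of $H^0_P(H^1(G,R))$, which is finitely generated because $H^1(G,R)$ is.
\end{enumerate}

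You instead use only that $S$ is a normal affine domain. The $S_2$ property gives $\grade(P,S) \geq 2$. Grothendieck's finiteness theorem, combined with $\depth S_Q = 1$ at height-one primes, $\depth S_Q \geq 2$ above them, and the dimension formula, gives $f_P(S) \geq 3$.

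What each approach buys:
\begin{itemize}
\item Your argument is more general: it never uses the group action or group cohomology, so it holds for every normal domain finitely generated over a field. It is the same kind of computation the paper itself does with \cite[13.1.17]{bs} in the inductive step of Theorem \ref{crucial}, so it would make the base case $r = 3$ there uniform with the rest. The price is invoking the deep finiteness theorem.
\item The paper's argument avoids that theorem once the spectral sequences are set up. It also gives sharper information: $H^2_P(S)$ is an explicit homomorphic image of $H^0_P(H^1(G,R))$, so for instance it is annihilated by $\ann_S H^1(G,R)$.
\end{itemize}

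One small correction to your closing remarks: they are unnecessary, and one is inaccurate. Since $S$ is a quotient of a polynomial ring over $\F_q$, the finiteness theorem applies to $S$ directly. Passing to a completion is not meaningful for the non-local ring $S$. Finite generation of a module not known a priori to be finitely generated, such as $H^2_P(S)$, is not a local property in general; what does hold is Faltings' local--global principle for the finiteness dimension, itself a nontrivial theorem. You can simply drop that paragraph.
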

\begin{proof}
We use the Ellingsrud-Skjelbred  spectral sequence with $\A = P$ and $M = R$. We have two first quadrant spectral sequences
\[
(\alpha)\colon \quad \quad  E_2^{p,q} = H^p_{P}(H^q(G, R)) \Longrightarrow H^{p+q}_{P}(G,R), \ \text{and}
\]
\[
(\beta)\colon \quad \quad \mathcal{E}_2^{p,q} = H^p(G, H^q_{P}(R)) \Longrightarrow H^{p+q}_{P}(G,R).
\]
We first analyse the spectral sequence $(\beta)$. We note that as $R$ is a finite $S$-module we have $H^q_{P}(R) = H^q_{PR}(R)$. We note that $PR$ has height $\geq 2$. So we have $H^i_{PR}(R) = 0$
for $i = 0, 1$.  If $\height P \geq 3$ then $\height PR \geq 3$. So $H^2_{PR}(R) = 0$. In particular $H^n_{P}(G, R) = 0$ for $n < 2$ and $H^2_{P}(G, R) = 0$ if $\height P \geq 3$.

Next we analyse the spectral sequence $(\alpha)$. We note that $E_2^{0,0} = H^0_P(S) = E_\infty^{0,0} = 0$. We also have $E_2^{1,0} = H^1_P(S) = E_\infty^{1,0} = 0$. Now assume $\height P \geq 3$.
We have an exact sequence
$$ E_2^{0, 1} = H^0_P(H^1(G, R))  \rt E_2^{2, 0} = H^2_P(S) \rt E_3^{2,0}.$$
We note that $E_3^{2,0} = E_\infty^{2,0} = 0$ as it is sub-quotient of $H^2_{P}(G, R) = 0$. So $H^2_P(S)$ is a quotient of $H^0_P(H^1(G, R))$ which is a finitely generated $S$-module.
\end{proof}
We will need the following:
\begin{lemma}
\label{eclair} Let $M$ be a Cartan $S$-module. Assume $P$ is not a $\Pc^*$-invariant prime ideal. If $P^r \subseteq \ann M$ then $M_P = 0$.
\end{lemma}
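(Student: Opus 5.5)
The plan is to argue by contradiction, combining Lemma \ref{ann-p*} with the fact that $\sqrt{\ann M}$ of a Cartan module is a radical $\Pc^*$-invariant ideal. Suppose $M_P \neq 0$. Set $\A = \ann M$. By Lemma \ref{ann-p*}, $\A$ is a $\Pc^*$-invariant ideal of $S$, and then $\sqrt{\A}$ is also $\Pc^*$-invariant by \cite[11.2.5]{SL}.

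Since $P^r \subseteq \A$, we have $P \subseteq \sqrt{\A}$. On the other hand, $M_P \neq 0$ means $\A \subseteq P$, because otherwise some $s \in \A \setminus P$ would become a unit in $S_P$ and kill $M_P$. Since $P$ is prime, this gives $\sqrt{\A} \subseteq P$. Hence $P = \sqrt{\A}$.

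So $P$ is the radical of a $\Pc^*$-invariant ideal, and therefore $P$ is $\Pc^*$-invariant. This contradicts the hypothesis, so $M_P = 0$.

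The only step needing care is the citation that the radical of a $\Pc^*$-invariant ideal is again $\Pc^*$-invariant. The paper already uses this fact from \cite[11.2.5]{SL} in the proof of Theorem \ref{main} and in its introduction. Note also that $\A$ need not be homogeneous, since $M$ is not assumed graded. If \cite[11.2.5]{SL} is only stated for homogeneous ideals, I would extend it directly: the total operation $P(\xi)$ is a ring homomorphism, so $f^{n} \in \A$ gives $P(\xi)(f)^{n} = P(\xi)(f^n) \in \A[\xi]$. If $\A$ is $\Pc^*$-invariant then $P(\xi)(\A) \subseteq \A[\xi]$. It follows that $P(\xi)(f)$ lies in $\sqrt{\A[\xi]} = \sqrt{\A}[\xi]$, so every coefficient $\Pc^i(f)$ lies in $\sqrt{\A}$.
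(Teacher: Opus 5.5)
Your proof is correct and is essentially the paper's argument: both use Lemma \ref{ann-p*} and \cite[11.2.5]{SL} to see that $\sqrt{\ann M}\supseteq P$ is $\Pc^*$-invariant, deduce that $\ann M\not\subseteq P$ (your step $\ann M\subseteq P\Rightarrow P=\sqrt{\ann M}$ is exactly what the paper leaves implicit), and conclude because an element of $\ann M\setminus P$ is a unit in $S_P$. Your extra verification that the radical of a possibly non-homogeneous $\Pc^*$-invariant ideal is again $\Pc^*$-invariant is valid and goes beyond what the paper spells out; it uses that $P(\xi)$ is a ring homomorphism together with $\sqrt{\A[\xi]}=\sqrt{\A}[\xi]$.
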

\begin{proof}
By \ref{ann-p*} we get that $J = \ann M$ is a $\Pc^*$-invariant ideal of $S$. We also have $\sqrt{J}$ is a $\Pc^*$-invariant ideal in $S$, see \cite[11.2.5]{SL}. As $\sqrt{J}  \supseteq P$ and $P$ is not $\Pc^*$-invariant ideal in $S$ it follows that there exist $\xi \in J \setminus P$. As $\xi$ is invertible in $S_P$ and as it annhilates $M_P$ it follows that $M_P = 0$.
\end{proof}
The following result follows easily from Theorem \ref{chief} and \ref{eclair}.
\begin{corollary}
\label{stock}(with setup as in \ref{setup}). Let $P$ be a prime ideal in $S$ which is NOT $\Pc^*$-invariant. If $H^i_P(S)$ is a finitely generated module as an $S$-module then $H^i_P(S)_P = 0$. \qed
\end{corollary}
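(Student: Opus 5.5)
By Theorem \ref{chief}, applied to any finite homogeneous generating set of $P$ (a homogeneous prime of the Noetherian graded ring $S$ admits one), $M = H^i_P(S)$ carries a Cartan $S$-module structure. This is the only place where the Steenrod structure enters. The remaining task is to produce a power of $P$ inside $\ann M$, and then Lemma \ref{eclair} finishes.

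\textbf{Step 1.} I will show that $P^r M = 0$ for some $r$. Every element of $H^i_P(S)$ is annihilated by some power of $P$, since local cohomology with support in $P$ is $P$-torsion. By hypothesis $M$ is finitely generated; choose generators $m_1, \ldots, m_k$ and exponents $r_j$ with $P^{r_j} m_j = 0$. Then $r = \max_j r_j$ gives $P^r \subseteq \ann M$.

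\textbf{Step 2.} Lemma \ref{eclair} now applies. $P$ is not $\Pc^*$-invariant, $M$ is a Cartan $S$-module, and $P^r \subseteq \ann M$, so $M_P = 0$. Concretely, $\sqrt{\ann M}$ is $\Pc^*$-invariant and contains $P$. It is therefore strictly larger than $P$, which produces an element of $\ann M$ outside $P$; that element becomes a unit in $S_P$ and kills $M_P$.

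\textbf{Main obstacle.} The only point needing care is that the corollary is stated for an arbitrary prime $P$ that is not $\Pc^*$-invariant, while Theorem \ref{chief} requires a homogeneous ideal. I will handle this by reading the corollary in the homogeneous setting of the paper; this is the case used in Applications II and III, where $P$ is homogeneous. If non-homogeneous $P$ must also be covered, I would first check whether the constructions of Theorems \ref{loc} and \ref{maps} use homogeneity of $x$. Homogeneity is used there only for grading, and the identities and the cancellation of powers of $x$ in the domain $S$ are valid for any $x$. So Theorem \ref{chief} should extend verbatim to arbitrary generators $x_1, \ldots, x_m$, and the argument above goes through unchanged.
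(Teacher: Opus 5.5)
Your proposal is correct and is the argument the paper intends, since the paper writes only that the corollary ``follows easily from Theorem \ref{chief} and \ref{eclair}''. Theorem \ref{chief} gives the Cartan structure on $H^i_P(S)$; finite generation of this $P$-torsion module yields $P^r \subseteq \ann H^i_P(S)$; and Lemma \ref{eclair} finishes the proof. One small precision in your Step 2: from $\sqrt{\ann M} \not\subseteq P$ you first get an element of $\sqrt{\ann M}\setminus P$, and since $P$ is prime a suitable power of it lies in $\ann M \setminus P$.
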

The following result is crucial for our applications.
\begin{theorem}\label{crucial}
(with setup as in \ref{setup}). Let $r \geq 3$. The following assertions are equivalent:
\begin{enumerate}[\rm (i)]
\item
$\depth S_P \geq \min \{ \height P, r \}$ for all homogeneous primes $P$ of $S$.
\item
$\depth S_P \geq \min \{ \height P, r \}$ for all homogeneous $\Pc^*$-invariant primes $P$ of $S$.
\end{enumerate}
\end{theorem}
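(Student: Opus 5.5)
(i) $\Rightarrow$ (ii) is trivial, so we prove (ii) $\Rightarrow$ (i) by contradiction. Suppose some homogeneous prime violates the inequality, and choose a homogeneous prime $P$ with $\depth S_P < \min\{\height P, r\}$ of \emph{minimal height}. By (ii), $P$ is not $\Pc^*$-invariant. Set $h = \height P$.

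\textbf{Step 1: $h \geq 3$.} Since $S$ is normal, it satisfies $S_2$, so $\depth S_P \geq \min\{h,2\}$ for every $P$. If $h \leq 2$, this gives $\depth S_P \geq h = \min\{h,r\}$ (as $r \geq 3$), contradicting the choice of $P$.

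\textbf{Step 2: the punctured spectrum of $S_P$ is fine.} For every prime $\q \subsetneq P$ we claim $\depth S_\q \geq \min\{\height \q, r\}$.
\begin{enumerate}[\rm (a)]
\item If $\q$ is homogeneous, this holds by minimality of $h$.
\item If $\q$ is not homogeneous, let $\q^*$ be the ideal generated by its homogeneous elements. Then $\q^*$ is a homogeneous prime of height $\height \q - 1 < h$, so it satisfies the inequality.
\end{enumerate}
In case (b) we use the standard graded facts (Bruns--Herzog 1.5.8/1.5.9): $\height \q = \height \q^* + 1$ and $\depth S_\q = \depth S_{\q^*} + 1$. These give the inequality for $\q$.

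Consequently, in $S_P$ every local cohomology $H^j_{PS_P}(S_P)$ with $j < \min\{h,r\}$ is of finite length. This follows from the local duality/Grothendieck finiteness criterion: $S_P$ is a quotient of a regular local ring since $S$ is a finitely generated algebra. The criterion says $H^j_{PS_P}(S_P)$ is finitely generated for $j < \min\{h,r\}$, because for all $\q \subsetneq P$ we have $\depth S_\q + \height(P/\q) \geq \min\{h,r\}$, which holds by the claim and catenarity.

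\textbf{Step 3: transfer to $H^j_P(S)$.} The plan is to deduce that $H^j_P(S)$ itself, as an $S$-module, is finitely generated for $j < t := \min\{h,r\}$. For this we again use Grothendieck's finiteness theorem, now over $S$ with respect to the ideal $P$. The theorem requires $\depth S_\q + \height((\q+P)/\q) \geq t$ for all $\q \not\supseteq P$. This should follow from the $S_r$-type inequalities for primes not containing $P$ together with catenarity of $S$. Alternatively one can argue in the graded setting directly.

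This step is the main obstacle: primes $\q \not\supseteq P$ not contained in $P$ are not controlled by the minimality of $h$. I would first try to restrict attention to $\q \subseteq P$ by localizing, since we only need the statement at $P$. Concretely, work with $H^j_{P}(S)_P = H^j_{PS_P}(S_P)$. We then need finite generation of $H^j_P(S)$ only after a suitable localization, and so we would replace Corollary~\ref{stock} by a local version of Lemma~\ref{eclair}.

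That local version goes as follows. Lemma~\ref{eclair} only needs $P^n \subseteq \ann M$ for some $n$, together with $M$ being a Cartan module. The Cartan structure on $H^j_P(S)$ from Theorem~\ref{chief} gives $\ann H^j_P(S)$ $\Pc^*$-invariant. Then $J = \sqrt{\ann H^j_P(S)}$ is $\Pc^*$-invariant. If $J \supseteq P$, then $J \not\subseteq P$ since $P$ is not invariant (indeed, if $J = P$ then $P$ would be invariant). Hence some $\xi \in \ann H^j_P(S)$, $\xi \notin P$, and so $H^j_P(S)_P = 0$.

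To get $J \supseteq P$, it suffices that $H^j_P(S)$ is annihilated by a power of $P$. We aim for this via finite generation, as in Corollary~\ref{stock} (any finitely generated $P$-torsion module is killed by a power of $P$). Proposition~\ref{rachel} handles $j \leq 2$ outright. For $3 \leq j < t$ we need the finiteness argument above.

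\textbf{Step 4: conclusion.} Granting Step 3, we get $H^j_{PS_P}(S_P) = 0$ for all $j < t$. Hence $\depth S_P \geq t = \min\{h,r\}$, a contradiction. Therefore (ii) implies (i).
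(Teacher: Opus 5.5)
\textbf{There is a genuine gap: Step~3 is never proved.} It is not a detail that can be granted, because it is the whole content of (ii)~$\Rightarrow$~(i). Choosing $P$ of minimal height does not give it what it needs.

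Corollary~\ref{stock} and Lemma~\ref{eclair} are about the \emph{global} Cartan $S$-module $H^j_P(S)$, and they need a power of $P$ to annihilate it. Your \emph{local version} is just Lemma~\ref{eclair} again and needs exactly the same thing. The paper gets that power of $P$ from finite generation of $H^j_P(S)$. By \cite[13.1.17]{bs}, finite generation is controlled by $\depth S_\q + \height((P+\q)/\q)$ for \emph{all} homogeneous $\q \not\supseteq P$, not only for $\q \subseteq P$.

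Minimality of $\height P$ tells you nothing about a homogeneous prime $\q \not\subseteq P$ of height $\geq h$ that violates the inequality. Nothing in your setup rules out such a $\q$ with $\depth S_\q = 3$ and $\height((P+\q)/\q)=1$. That would make some $H^j_P(S)$ with $j \leq 4$ non-finitely generated, so for $4 \leq j < \min\{h,r\}$ you cannot conclude $H^j_P(S)_P = 0$.

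The finite length of $H^j_{PS_P}(S_P)$ from Step~2 cannot be used instead, because the paper puts no Cartan structure on $S_P$-modules. One could try to extend $P(\xi)$ to a map $S_P \to S_P[[\xi]]$ and redo Sections 3--5 over $S_P$, but you have not set that up.

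\textbf{How the paper avoids this.} It inducts on $r$ and proves a stronger global claim: for \emph{every} homogeneous prime $P$, the modules $H^i_P(S)$ are finitely generated for $i < \min\{\height P, r\}$.
\begin{enumerate}[\rm (a)]
\item For $r=3$ this is Proposition~\ref{rachel}.
\item For $r \geq 4$, the inductive hypothesis, Corollary~\ref{stock} and (ii) together give $\depth S_\q \geq \min\{\height \q, r-1\}$ at every homogeneous $\q$.
\item Then, for $\height P \geq r$, each term in \cite[13.1.17]{bs} is at least $r$. If $\height \q \leq r-1$ the term equals $\height(P+\q) \geq \height P$. Otherwise it is at least $(r-1)+1$.
\end{enumerate}

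\textbf{A repair within your own scheme.} The minimal-counterexample argument works if you minimize $e = \depth S_P$ over the violating homogeneous primes, rather than the height. Such a $P$ is still not $\Pc^*$-invariant by (ii). Every homogeneous $\q \not\supseteq P$ then satisfies $\depth S_\q + \height((P+\q)/\q) \geq e+1$:
\begin{enumerate}[\rm (a)]
\item a violating $\q$ has depth $\geq e$, and $\height((P+\q)/\q) \geq 1$ because $P \not\subseteq \q$;
\item a non-violating $\q$ of height $<r$ is \CM\ and contributes $\height(P+\q) \geq \height P > e$;
\item a non-violating $\q$ of height $\geq r$ contributes at least $r+1$.
\end{enumerate}
Hence $H^i_P(S)$ is finitely generated for $i \leq e$. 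Corollary~\ref{stock} then gives $\depth S_P \geq e+1$, a contradiction.
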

\begin{proof}
The assertion (i) $\implies$ (ii) is trivial.
We now prove (ii) $\implies$ (i). Let $P$ be a homogeneous $\Pc^*$-invariant prime ideal of $S$. If $\height S_P \leq 2$ then clearly  $\depth S_P = \height P$, by \ref{rachel}.
Assume that $\height P  = t \geq 3$. Set
$$f_P = \min \{ i \mid H^i_P(S) \ \text{is not finitely generated $S$-module} \}. $$
Set $s_P = \min \{ \height P, r \}$.
We make the following

Claim: If $P$ is  a homogeneous prime ideal then $f_P \geq  s_P$.

We first prove that the Claim proves the result. Let  $P$ be not a $\Pc^*$-invariant homogeneous  ideal. Then $H^i_P(S)$ are finitely generated $S$-modules for $i < s_P$.  By \ref{stock} it follows that $H^i_{PS_P}(S_P) = H^i_{P}(S)_P = 0$ for $i < s_P$. Thus $\depth S_P \geq s_P$. The result follows.

We prove the Claim by induction on $r \geq 3$ (we are assuming that $\height P = t \geq 3$). \\
We first consider the case when $r = 3$. In this case the result follows from Proposition \ref{rachel}.

Next consider the case when $r \geq 4$ and the result holds for $r - 1$.
By induction hypotheses $f_P(S) \geq \min \{ \height P, r - 1 \}$ for all homogeneous primes.   In particular $\depth S_\q \geq  \min \{\height \q, r - 1 \}$ for all homogeneous non $\Pc^*$-invariant prime ideals $\q$ of $S$. We also have $\depth  S_P \geq s_P$ for all $\Pc^*$-invariant ideals by our assumption.
We may assume $\height P \geq r$, otherwise there is nothing to show.
 By \cite[13.1.17]{bs} we have
\[
f_P = \min \{ \depth S_\q  + \height((P+\q)/\q) \mid \q \in \  ^* \Spec(S) \setminus V(P) \}.
\]
If $\height \q \leq r -1$ then $\depth S_\q = \height \q$. We have $\depth S_\q + \height((P+\q)/\q) = \height \q + \height(P + \q) - \height \q = \height (P + \q) > r$.
If $\height \q \geq r$ then $$\depth S_\q  + \height((P+\q)/\q)  \geq r - 1 + 1 = r.$$
Thus $f_P \geq r$. So Claim follows.
\end{proof}

We now give
\begin{proof}[Proof of Theorem \ref{sr}]
The assertion (i) $\implies$ (ii) is clear.

We now prove (ii) $\implies$ (i). It suffices to prove $\depth S_P \geq \min \{\height P, r \}$ for homogeneous primes in $S$, see \ref{hom-sr}.
By \ref{crucial} it follows that \\ $\depth S_P \geq \min \{ \height P, r \}$ for all homogeneous primes $P$ of $S$. The result follows.
\end{proof}

Next we give
\begin{proof}[Proof of Corollary \ref{sr-cor}]
We first prove (ii) $\implies $ (i). Let $P$ be a graded $\Pc^*$-invariant prime ideal of $S$. We note that if  $\height P = g$ then $\dim S/P = d - g$, as $S$ is equi-dimensional and universally catenary, see \cite[Lemma 2, p.\ 250]{Ma} for the local version; same proof works for the *-local case. By \ref{bella} we get that if $g \leq r$ then $P$ contains $\db_{d,0}, \ldots, \db_{d,g-1}$.
It follows that $S_P$ is \CM. If $\height P \geq r + 1$ then by \ref{bella}  we get that  $P$ contains $\db_{d,0}, \ldots, \db_{d,r-1}$.
So $\depth S_P \geq r$. The result follows from \ref{sr}.

Next we prove  (i) $\implies $ (ii). The extension $D^*(d) \subseteq S$ is an extension of normal domains. As $(\db_{d,0}, \ldots, \db_{d,r-1})$ is a prime ideal of height $r$ in $D^*(d)$, it follows that $(\db_{d,0}, \ldots, \db_{d,r-1})S$ has height $r$. As $S$ satisfies $S_r$ it follows from \cite[1.2.10]{BH} that $\grade \left( (\db_{d,0}, \ldots, \db_{d,r-1})S, S \right)) = r$. The result follows.
\end{proof}
\section{Proof of Theorem \ref{fg-almost}}
In this section we give
\begin{proof}[Proof of Theorem \ref{fg-almost}]
We first prove

Claim-1: $\depth S_\q \geq  \height \q - 1$ for all primes $\q \neq S_+$. \\
We have by \cite[13.1.17]{bs} $$d - 1 = f_{S_+} = \min \{ \depth S_\q + \height S_+/\q \mid  \q \ \text{ homogeneous prime}, \q \neq S_+ \} . $$
So if $\q \neq S_+$ then $\depth S_\q + \height S_+/\q \geq d -1$. We have $\height S_+/\q = \height S_+ - \height \q$ as $S$ is equi-dimensional and universally catenary, see \cite[Lemma 2, p.\ 250]{Ma} for the local version; same proof works for the *-local case. The result follows.

Claim-2: $f_P \geq \height P$ for all  homogeneous primes $P \neq S_+$.\\
We have by \cite[13.1.17]{bs},
$$f_P = \min \{ \depth S_\q + \height (P + Q)/\q \mid  \q \in \ ^* \Spec(S) \setminus V(P) \} . $$
Let $\q \nsupseteq P$.

If $\q = 0$ then $\depth S_0 + \height P  = \height P$.

If $\q \neq 0$ we have $\depth S_\q + \height( (P+\q)/\q) \geq \height \q -1 + \height (P+q) - \height \q \geq \height P$. The result follows.

Claim-3: If $\q$ is not a $\Pc^*$-invariant prime ideal then $S_\q$ is \CM. \\
We have $f_\q \geq \height \q$. The result follows from \ref{stock}.

(1) This follows from Claim-3. \\
(2), (3) Let $J = \sqrt{J}$ define the non-CM locus of $S$. Then by (1) we have $J = P_1\cap \cdots \cap P_s$ where $P_i$ are $\Pc^*$-invariant prime ideals of $S$. If $\height P_i < d -1$ dor some $i$ then there exists a prime ideal $\q$ of height $d -1$ which is not $\Pc^*$-invariant  and contains $P_i$, see \ref{going-up}. By (1) we get a contradiction.

(4) Let $D = D^*(d)$. Let $D(u)$ be the $*$-canonical module of $D$. We have $H^{d-1}_{S_+}(S)^\vee = \Ext^1_D(S, D(u))$. If $\q$ is a prime ideal of $D$ with $\height \leq d-2$ then $S_\q$ is \CM.  So $\Ext^1_D(S,D)_\q = 0$.
So $ \dim H^{d-1}_{S_+}(S)^\vee \leq 1$. It cannot be zero as $f(S) = d -1$. The result follows.
\end{proof}
\section{Application IV}
Set dimension of the zero module as $-1$.
We first give
\begin{proof}[Proof of Theorem \ref{coho-ann}]
We note that if $d \leq 3$ then $S$ is \CM \ (see \cite{S-3}) and so we have nothing to prove. Also if $p$ does not divide order of $G$ then $S$ is \CM, and so we have nothing to prove. So we assume that $p$ divides $|G|$.  If $d \geq 4$ then by a result of Ellingsrud-Skjelbred \cite[3.9]{ES} we have
\[
\depth S \geq \min\{ \dim_{\F_q}V^P + 2, d \},
\]
where $P$ is a Sylow $p$-subgroup of $G$. We note that $\dim_{\F_q} (V)^P \geq 1$, see \cite[8.2.1]{SL}. So $\depth S \geq 3$. Thus $H^j_{S_+}(S) = 0$ for $j \leq 2$.

We use the Ellingsrud-Skjelbred  spectral sequence with $\A = S_+$ and $M = R$. We have two first quadrant spectral sequences
\[
(\alpha)\colon \quad \quad  E_2^{p,q} = H^p_{S_+}(H^q(G, R)) \Longrightarrow H^{p+q}_{S_+}(G,R), \ \text{and}
\]
\[
(\beta)\colon \quad \quad \mathcal{E}_2^{p,q} = H^p(G, H^q_{S_+}(R)) \Longrightarrow H^{p+q}_{S_+}(G,R).
\]
We first analyse the spectral sequence $(\beta)$. We note that as $R$ is a finite $S$-module we have $H^q_{S_+}(R) = H^q_{R_+}(R) = 0$ for $q \neq d$. So the spectral sequence $(\beta)$ collapses. In particular $H^n_{S_+}(G, R) = 0$ for $n < d$.

Next we analyse the spectral sequence $(\alpha)$. Set $H^i(-) = H^i_{S_+}(-)$.

For the convenience of the reader we first give a detailed proof of $\dim H^3_{S_+}(S)^\vee \leq 1$ when $d \geq 4$.
We have $E_2^{3,0} = H^3(S)$.
Note $E^{3,0}_4 = E^{3,0}_\infty$. As $E^{3,0}_\infty$ is a sub-quotient  of $H^3_{S_+}(G, R) = 0$ we get that $E^{3,0}_\infty = 0$.
So we have a surjective map
$$ E_3^{0,2} \rt E_3^{3,0}  \rt 0.$$
We note that $ E_3^{0,2}$ is a sub-quotient of $ E_2^{0,2} = H^0(H^2(G, R))$ and so has finite length. Thus $E_3^{3,0}$ has finite length.
We have an exact    sequence
$$E_2^{1, 1} = H^1(H^1(G,R)) \rt E_2^{3,0} = H^3(S)  \rt E_3^{3,0} \rt 0. $$
Dualizing we get an exact sequence
$$ 0 \rt {E_3^{3,0}}^\vee \rt H^3(S)^\vee \rt H^1(H^1(G, R))^\vee.$$
We have ${E_3^{3,0}}^\vee$ has finite length. By \ref{dual-dim} we get  $ \dim H^1(H^1(G, R))^\vee \leq 1$. The result follows.

Now let $4 \leq j \leq d -1$.  We have $E^{j,0}_2 = H^j_{S_+}(S)$.
Note $E^{j,0}_{j+1} = E^{j,0}_\infty$. As $E^{j,0}_\infty$ is a sub-quotient  of $H^j_{S_+}(G, R) = 0$ we get that $E^{j,0}_\infty = 0$.

(*) We also note that for $q > 0$ we have $E^{p,q}_r$ is a sub-quotient of $H^p(H^q(G,R))$. So $\dim (E^{p,q}_r)^\vee \leq p$ by \ref{dual-dim}.

We prove by descending induction that $\dim (E^{j,0}_t)^\vee \leq j -t$ for $2 \leq t \leq j$.
When $t =j$ we have an exact sequence $E^{0,j-1}_j \rt E_j^{j,0} \rt 0$. So  $\dim (E^{j,0}_t)^\vee \leq 0$. Now assume that the result holds for $t \geq 3$.
We have an exact sequence
$$E^{j-t+1, t-2}_{t-1} \rt E^{j,0}_{t-1} \rt E^{j,0}_t \rt 0.$$
Taking Matlis duals we have an exact sequence
$$ 0 \rt (E^{j,0}_t)^\vee \rt (E^{j,0}_{t-1})^\vee \rt (E^{j-t+1, t-2}_{t-1})^\vee.$$
By induction hypotheses we have $\dim (E^{j,0}_t)^\vee \leq j-t$. By (*) we get that \\
 $\dim (E^{j-t+1, t-2}_{t-1})^\vee \leq j-t + 1$. Thus
 $\dim (E^{j,0}_{t-1})^\vee \leq  j-t + 1$. This finishes the inductive step.

 So we have $\dim H^j(S)^\vee = \dim (E^{j,0}_2)^\vee \leq j - 2$. The result follows.
\end{proof}
Finally we give
\begin{proof}[Proof of Corollary \ref{khy}]
Assume $S_P$ is not \CM.
If $P$ is not a homogeneous prime ideal of $S$ then let $P^*$ be the prime ideal generated by all homogeneous elements of $P$. Then $\height P = \height P^* + 1$ ; see \cite[1.5.8]{BH}; and $\depth S_P = \depth S_{P^*} + 1$; see \cite[1.5.9]{BH}. Thus $S_{P^*}$ is not \CM. So it suffices to assume $P$ is a homogeneous prime ideal of $S$.

Let $P$ be a homogeneous prime ideal in $S$ with $S_P$ not \CM. Suppose if possible $(\db_{d,0}, \db_{d, 1}, \db_{d,2}) \nsubseteq P$. Let $x \in \{\db_{d,0}, \db_{d, 1}, \db_{d,2}\}$ be such that $x \notin P$. Let $D = D^*(d)$ be the Dickson algebra and let $D(u)$ be the $^*$-canonical module of $D$. Let $\q = P \cap D$. Then $x\notin \q$. We have $H^i_{S_+}(S)^\vee = \Ext^{d-i}_D(S, D(u))$ by local duality. As $x \notin \q$ and $x \in \ann \Ext^{i}_D(S, D(u))$ for $i > 0$ it follows $\Ext^{i}_D(S, D(u))_\q = 0$ for $i > 0$. Thus  $S_\q$ is \CM. As $S_P$ is a localization of $S_\q$ it follows that $S_P$ is \CM. This is a contradiction. The result follows.

\end{proof}
\section{Appendix}
In the appendix we prove the following result.
\begin{theorem}
\label{dual-dim} Let $K$ be a field. Let $T= \bigoplus_{n \geq 0}T_n$ be a finitely generated graded $K = T_0$-algebra. Let $M$ be a finitely generated graded $T$-module of dimension $r$. Let $U_i = H^i_{S_+}(M)$  for $i \geq 0$. Then $\dim U_i^\vee \leq i$ for $0 \leq i \leq r$.
\end{theorem}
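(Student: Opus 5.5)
The plan is to combine graded local duality with an induction on the dimension. Here $S_+$ is read as $T_+$, and $(-)^\vee$ denotes the graded Matlis dual $\operatorname{Hom}_K(-,K)$.

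First we reduce to a polynomial ring. By graded Noether normalization (after possibly extending $K$ to an infinite field, which changes neither local cohomology nor dimensions) we choose a graded polynomial subring $A = K[y_1,\dots,y_n] \subseteq T$ over which $T$ is finite. Then $M$ is a finitely generated graded $A$-module. By the independence theorem, $H^i_{T_+}(M) = H^i_{A_+}(M)$, since $\sqrt{A_+T} = T_+$. The Krull dimension of a $T$-module that is finite over $A$ is the same whether computed over $A$ or over $T$. This holds in particular for $U_i^\vee$, once we know it is finitely generated, and we get that from duality in the next step. So we may assume $T = A$ is a polynomial ring of dimension $n$.

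Next, graded local duality over $A$ gives
\[
U_i^\vee \cong \Ext^{n-i}_A(M, A(-a))
\]
for a suitable shift $a$. In particular $U_i^\vee$ is a finitely generated graded $A$-module. So it suffices to prove
\[
\dim \Ext^{n-i}_A(M, A) \leq i .
\]
For this we localize at a prime $\q$ of $A$ with $\height \q < n - i$. Since $A_\q$ is regular of dimension $\height \q$, it has global dimension $\height \q$. Therefore $\Ext^{n-i}_A(M,A)_\q = \Ext^{n-i}_{A_\q}(M_\q, A_\q) = 0$, because $n - i > \height \q$.

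Hence every prime in the support of $\Ext^{n-i}_A(M,A)$ has height at least $n-i$. Because $A$ is a polynomial ring, hence catenary and equidimensional, such a prime satisfies $\dim A/\q = n - \height \q \leq i$. This gives $\dim U_i^\vee \leq i$. The same argument works for every $i$, not only for $i \leq r$; the case $i > r$ is vacuous anyway, since then $U_i = 0$.

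The only delicate point is bookkeeping. One must make sure that dualizing commutes with the change of rings $T \to A$, so that $U_i^\vee$ is the same module over either ring, and that the dimension of a finitely generated module agrees over $A$ and over $T$. Both hold because $T$ is module-finite over $A$ and the two ideals have the same radical. The argument needs no hypothesis on $M$ beyond finite generation.
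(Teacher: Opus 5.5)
Your proof is correct, but it takes a different route from the paper's. You pass to a graded Noether normalization $A = K[y_1,\dots,y_n] \subseteq T$ and identify $U_i^\vee$ with $\Ext^{n-i}_A(M, A(-a))$ by graded local duality. The bound then follows because $A_\q$ has global dimension $\height \q$, so every prime in the support of $U_i^\vee$ has height at least $n-i$.

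The paper instead argues by induction on $i$ directly over $T$:
\begin{itemize}
\item It replaces $M$ by $\ov{M} = M/H^0(M)$ and chooses a homogeneous $x \in T_+$ regular on $\ov{M}$.
\item It uses the long exact sequence $H^{i-1}(N) \to H^i(M)(-|x|) \xrightarrow{x} H^i(M)$ with $N = \ov{M}/x\ov{M}$.
\item After dualizing, the cokernel of multiplication by $x$ on $H^i(M)^\vee$ is a submodule of $H^{i-1}(N)^\vee$. By induction this has dimension at most $i-1$, so $\dim H^i(M)^\vee \le i$.
\end{itemize}

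The paper's argument is more elementary: it needs neither Noether normalization nor an Ext computation. However, it tacitly uses that $H^i(M)^\vee$ is a finitely generated $T$-module. That finiteness is needed to pick $x$ regular on $\ov{H^i(M)^\vee}$. It is also needed to conclude that the dimension drops by at most one modulo $x$. The paper never justifies this.

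Your approach supplies exactly this finiteness as a by-product of local duality, and gets the bound in one step. It also matches how the paper actually uses the result, through the identification $H^i_{S_+}(S)^\vee \cong \Ext^{d-i}_D(S, D(u))$ over the Dickson algebra.

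One minor point: extending $K$ to an infinite field is unnecessary. Graded Noether normalization by homogeneous elements, possibly of different degrees, holds over any field.
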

\begin{remark}
This result is certainly known. However we have been unable to get a reference. As it is crucial we give a proof. When $T$ is standard graded the result follows from \cite[17.1.9]{bs}.
\end{remark}
\begin{proof}[Proof of Theorem \ref{dual-dim}]
Set $H^i(-)  = H^i_{T_+}(-)$.
We prove by induction on $i$ that if $M$ is a finitely generated grade $T$-module of dimension $s$ then $H^i(M)^\vee $ has dimension $\leq i$ for $i \geq 0$.
This result is certainly true when $i = 0$.
Assume $i \geq 1$ and that $\dim H^j(N) \leq j$ for all finitely generated graded $T$-modules with $j < i$.

If $E$ is a finitely generated, graded $T$-module then set $\ov{E} = E/H^0(E)$. If $\dim E > 0$ then $\depth \ov{E} > 0$ and $H^r(\ov{E}) = H^r(E)$ for $r > 0$. Let $x \in T_+$ be homogeneous such that $x$ is $\ov{E}$-regular. Then $\ker (E(-|x|) \xrightarrow{x} E)$ has finite length.

If $\dim M = 0$ then $H^r(M) = 0$ for $r > 0$. So we have nothing to show. Assume $\dim M > 0$.
Also if $\dim H^i(M)^\vee \leq 0$ then again  we have nothing to show. So assume $\dim H^i(M)^\vee > 0$. Let $x \in T_+$ be homogeneous which is $\ov{M} \oplus \ov{H^i(M)^\vee}$-regular.
The exact sequence $0 \rt \ov{M}(-|x|) \xrightarrow{x} \ov{M} \rt N \rt 0$ induces a long
exact sequence
$$H^{i-1}(N) \xrightarrow{\delta} H^i(M)(-|x|) \xrightarrow{x} H^i(M).$$
Set $W = \image \delta$. Taking duals we get an exact sequence
$$ 0 \rt L \rt H^i(M)^\vee \xrightarrow{x} H^i(M)^\vee(|x|) \rt W^\vee \rt 0$$
and that $W^\vee $ is a submodule of $H^{i-1}(N)^\vee$ and so by induction hypothesis  has dimension $\leq i -1$. As $L$ has finite length it follows that $\dim H^i(M)^\vee \leq i$. The result follows by induction.
\end{proof}

We will also need the following property
\begin{theorem}\label{hom-sr}
 Let $K$ be a field. Let $T= \bigoplus_{n \geq 0}T_n$ be a finitely generated graded $K = T_0$-algebra. The following assertions are equivalent
\begin{enumerate}[\rm (i)]
\item $T$ satisfies $S_r$ property.
\item For all homogeneous primes $P$ of $T$ we have
$$ \depth T_P \geq \min\{\height P, r \}.$$
\end{enumerate}
\end{theorem}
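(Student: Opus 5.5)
We prove Theorem \ref{hom-sr} by showing (i) $\implies$ (ii) directly and reducing (ii) $\implies$ (i) to the homogeneous case. The implication (i) $\implies$ (ii) is trivial, since the $S_r$ inequality is required for every prime, in particular every homogeneous one.

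For (ii) $\implies$ (i), let $\q$ be an arbitrary prime of $T$. If $\q$ is homogeneous there is nothing to prove. Otherwise let $\q^*$ be the ideal generated by all homogeneous elements of $\q$. This is a homogeneous prime, and the standard facts \cite[1.5.8, 1.5.9]{BH} (already used in the proof of Corollary \ref{khy}) give
$$\height \q = \height \q^* + 1, \qquad \depth T_\q = \depth T_{\q^*} + 1.$$
For the depth statement I would cite \cite[1.5.9]{BH} applied to the graded module $T$. If one prefers to argue directly, pass to $T_{\q^*}$ and use that $\q T_{\q^*}$ corresponds to a prime of dimension one over the homogeneous prime: it is a localization of $(T/\q^*)_{(\q^*)}[t,t^{-1}]$-type structure.

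Applying hypothesis (ii) to $\q^*$ gives
$$\depth T_\q = \depth T_{\q^*} + 1 \geq \min\{\height \q^*, r\} + 1 = \min\{\height \q^* + 1, r+1\} \geq \min\{\height \q, r\}.$$
Hence $T_\q$ satisfies the $S_r$ inequality, which proves (i).

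The only delicate point is the depth equality $\depth T_\q = \depth T_{\q^*}+1$ for non-homogeneous $\q$. Here I rely on \cite[1.5.9]{BH}, which applies because $T$ is a finitely generated graded module over itself and $\q^* \neq \q$. Everything else is formal. If one wants to avoid the exact equality, the inequality $\depth T_\q \geq \depth T_{\q^*}$ together with $\height \q = \height \q^*+1$ does not quite suffice when $\height \q^* < r$. So the equality is genuinely needed, and \cite[1.5.9]{BH} is the step to invoke.
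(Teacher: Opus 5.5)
Your proof is correct and is the paper's argument: for a non-graded prime $\q$, pass to $\q^*$ and use $\height \q = \height \q^* + 1$ and $\depth T_\q = \depth T_{\q^*} + 1$ from \cite[1.5.8, 1.5.9]{BH}, and you write out the inequality chain that the paper compresses into ``the result easily follows''. Your parenthetical alternative sketch of the depth equality is too vague to stand as a proof, but it is not needed once \cite[1.5.9]{BH} is cited.
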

\begin{proof}
The assertion (i) $\implies$ (ii) is trivial.  We prove the converse. Let $P$ be a prime ideal of $T$. which is not homogeneous.  Let $P^*$ be the prime ideal generated  by  all homogeneous elements of $P$. Then $\height P = \height P^* + 1$; see \cite[1.5.8]{BH} and $\depth S_P = \depth S_{P^*} + 1$; see \cite[1.5.9]{BH}. The result easily follows.
\end{proof}


\end{document}